\newcommand{\ds}{\displaystyle}
\newcommand{\ts}{\textstyle}
\newcommand{\diag}{\operatorname{diag}}
\newcommand{\spann}{\operatorname{span}}
\newcommand{\Emb}{\operatorname{E}_\gamma}
\newcommand{\Nn}{{\mathbb N}}
\newcommand{\Rr}{{\mathbb R}}
\newcommand{\Zz}{{\mathbb Z}}
\newcommand{\Ttt}{\mathbf{T}}
\newcommand{\Ccc}{\mathbf{C}}
\newcommand{\Ddd}{\mathbf{D}}
\newcommand{\Mmm}{\mathbf{M}}
\newcommand{\PiL}{\Pi_{n,p}^L}
\newcommand{\Lagpol}{\mathcal{L}_{n,p} f}
\newcommand{\Lub}{\mathrm{Lisa}_{n,p}}
\newcommand{\Lubblack}{\mathrm{Lisa}_{n,p}^{\mathrm{b}}}
\newcommand{\Lubwhite}{\mathrm{Lisa}_{n,p}^{\mathrm{w}}}
\newcommand{\LubAbr}{\mathrm{Lisa}}
\newcommand{\Liscurve}{\gamma_{n,p}}
\newcommand{\Aa}{\mathcal{A}}
\newcommand{\Bb}{\mathcal{B}}
\newcommand{\Sub}{\mathcal{X}}
\newcommand{\equi}{\overset{\Lub}{\sim}}
\newcommand{\Tripi}{\Pi_{2n(n+p)}^{\mathrm{trig},L}}
\newcommand{\Lagbastrig}{ e_{ij} }
\newcommand{\LagbastrigE}{\Emb \! \left( \! \hat{T}_{i}(x) \hat{T}_{j}(y)\! \right)\! }
\newcommand{\Dx}[1]{\mathrm{d}#1}
\newcommand{\minus}{%
  \setbox0=\hbox{-}%
  \vcenter{%
    \hrule width\wd0 height \the\fontdimen8\textfont3%
  }%
}
\begin{document}

\title{Bivariate Lagrange interpolation at the node points of non-degenerate Lissajous curves}
\subtitle{}

\author{Wolfgang Erb        \and
        Christian Kaethner  \and
        Mandy Ahlborg       \and
        Thorsten M. Buzug
}

\institute{W. Erb, C. Kaethner, M. Ahlborg, T.M. Buzug \at
              Universit\"at zu L\"ubeck \\
	      Ratzeburger Allee 160\\
	      23562 L\"ubeck \\
              \email{erb@math.uni-luebeck.de} \\          
              \email{kaethner@imt.uni-luebeck.de} \\
              \email{ahlborg@imt.uni-luebeck.de} \\
              \email{buzug@imt.uni-luebeck.de}
}

\date{September 26, 2014}

\titlerunning{Lagrange interpolation at Lissajous nodes}
\authorrunning{W. Erb et al. }

\maketitle

\begin{abstract}
Motivated by an application in Magnetic Particle Imaging, we study bivariate Lagrange interpolation at the node points 
of Lissajous curves. The resulting theory is a generalization of the polynomial interpolation theory developed for a node set known as Padua points. 
With appropriately defined polynomial spaces, we will show that the node points of non-degenerate Lissajous
curves allow unique interpolation and can be used for quadrature rules in the bivariate setting. An explicit formula for the Lagrange polynomials
allows to compute the interpolating polynomial with a simple algorithmic scheme. Compared to the already established schemes of the Padua and Xu points, the numerical results for the proposed
scheme show similar approximation errors and a similar growth of the Lebesgue constant.

\keywords{bivariate Lagrange interpolation \and quadrature formulas \and Chebyshev polynomials \and Lissajous curves}

\end{abstract}

\section{Introduction}

A challenging task for multivariate polynomial interpolation is the construction of a suitable set of node points. Depending on the application, 
these node points should provide a series of favorable properties including a unique interpolation in given polynomial spaces, a slow growth of the Lebesgue constant and
simple algorithmic schemes that compute the interpolating polynomial. The construction of suitable point sets for multivariate interpolation 
has a long-standing history. For an overview, we refer to the survey articles \cite{GascaSauer2000b,GascaSauer2000} and the references therein.   
Examples of remarkable constructions in the bivariate setting are the point sets introduced by Morrow and Patterson \cite{MorrowPatterson1978}, Xu \cite{Xu1996}, as well as some 
generalizations of them~\cite{Harris2013}. A modification of the Morrow-Patterson points, introduced as Padua points~\cite{CaliariDeMarchiVianello2005}, is particularly
interesting for the purposes of this article. 

In some applications, the given data points are lying on subtrajectories of the euclidean space. In this case, aside from the above mentioned favorable properties, it is 
mandatory that the node points are part of these trajectories. Lissajous curves are particularly interesting examples for us, as they are used as a sampling path in a young
medical imaging technology called Magnetic Particle Imaging (MPI)~\cite{Gleich2005Nature}.

In MPI, the distribution of superparamagnetic iron oxide nanoparticles is reconstructed by measuring the magnetic response of the particles. 
The measurement process is based on the combination of various magnetic fields that generate and move a magnetic field free point through a region of interest. 
Although different trajectories are possible, this movement is typically performed in form of 
a Lissajous curve~\cite{Knopp2009PhysMedBio}. The reconstruction of the particle density from the data on the Lissajous trajectory is currently done in a very
straight forward way, either by solving a system of linear equations based on a pre-measured system matrix or directly from the measurement
data~\cite{Gruettner2013BMT}. By using multivariate polynomial interpolation on the nodes of the sampling path, i.e. the Lissajous curve, we hope to obtain a further improvement
in the reconstruction process. 

Of the node points mentioned above, the Padua points, as described in \cite{BosDeMarchiVianelloXu2006}, are the ones with the strongest relation to Lissajous curves. 
They can be characterized as the node points of a particular degenerate Lissajous figure. Moreover, they satisfy a series of remarkable 
properties: they can be described as an affine variety of a polynomial ideal~\cite{BosDeMarchiVianelloXu2007}, they form a particular Chebyshev lattice~\cite{CoolsPoppe2011} and they 
allow a unique interpolation in the space $\Pi_n$ of bivariate polynomials of degree $n$~\cite{BosDeMarchiVianelloXu2006}. Furthermore, a simple formula for the Lagrange polynomials 
is available and the Lebesgue constants are growing slowly as $\mathcal O\left(\log^2 n \right)$~\cite{BosDeMarchiVianelloXu2006}. 

The aim of this article is to develop, similar to the Padua points, an interpolation theory for node points on Lissajous curves. To this end,
we extend the generating curve approach as presented in \cite{BosDeMarchiVianelloXu2006} to particular families of \mbox{Lissajous} curves in $[-1,1]^2$. 
In this article, we will focus on the node points of non-degenerate Lissajous curves, which are important for the application in MPI~\cite{Kaethner2014IEEE}.
Not all of the above mentioned properties of the Padua points will be carried over to the node points of Lissajous figures. However, the resulting theory will have some interesting resemblences, not only
to the theory of the Padua points, but also to the Xu points. 
 
We start our investigation by characterizing the node points $\Lub$ of non-degenerate Lissajous curves. Based on the 
node points $\Lub$, we will derive suitable quadrature formulas for integration with product Chebyshev weight functions. 
Next, we will provide the main theoretical results on bivariate interpolation based on the points $\Lub$. 
We will show that the points $\Lub$ allow unique interpolation in a properly defined space $\PiL$ of bivariate polynomials. 
Further, we will derive a formula for the fundamental polynomials of Lagrange interpolation.
This explicit formula allows to compute the interpolating polynomial with a simple algorithmic scheme similar to the one of 
the Padua points \cite{CaliariDeMarchiVianello2008}. We conclude 
this article with some numerical tests for the new bivariate interpolating schemes. Compared to the established interpolating schemes of the Padua and Xu points, the novel interpolation schemes show 
similar approximation errors and a similar growth of the Lebesgue constant.   

\section{The node points of non-degenerate Lissajous curves} \label{sec:Lissajous}

In this article, we consider $2\pi$-periodic Lissajous curves of the form
\begin{equation} \label{eq:generatingcurve}
 \Liscurve: \Rr \to \Rr^2, \quad \Liscurve(t) = \Big( \sin(nt), \, \sin((n+p)t) \Big),
\end{equation}
where $n$ and $p$ are positive integers such that $n$ and $n+p$ are relatively prime. Based on the calculations in \cite{BogleHearstJonesStoilov1994} (see also \cite{Lamm1997}), the 
Lissajous curve $\Liscurve$ is non-degenerate if and only if $p$ is odd. In this case, $\gamma_{n,p}: [0,2\pi) \to \Rr^2$ is an immersed plane curve with precisely 
$2 n (n+p) -2 n - p$ self-intersection points. In the following, we will always assume that $p$ is odd and sample the Lissajous curve $\gamma_{n,p}$ along the $4n(n+p)$ equidistant points 
\[ t_k := \frac{2\pi k}{ 4 n (n+p)}, \quad k = 1, \ldots, 4n(n+p).\]
In this way, we get the following set of Lissajous node points:
\begin{equation}
\Lub := \Big\{ \Liscurve (t_k): \quad k = 1, \ldots, 4n(n+p) \Big\}. \label{def:Luebeckpoints}
\end{equation}

To characterize the set $\Lub$, we divide $\Liscurve (t_k)$ for the even and odd integers $k$. For this decomposition, we use the fact that $n$ and $n+p$ are relatively prime. 
Then, if $n$ is odd, every odd integer $k$ can be written as $k = (2i+1) n + 2j(n+p)$ with $i,j \in \Zz$. If $k$ is even, we can write  
$k = 2 i n  + (2j+1)(n+p) $ with $i,j \in \Zz$. If $n$ is even, the same holds with the roles of $n$ and $n+p$ switched. In this way, we get the decomposition 
$\Lub = \Lubblack \cup \Lubwhite$ with the sets
\begin{align} 
\Lubblack &:= \left\{ \Liscurve \left( \frac{(2i+1)n + 2j (n+p) }{ 4 n (n+p)} 2\pi \right): \quad i,j \in \Zz \right\}, \label{def:Luebeckpointsb}\\
\Lubwhite &:= \left\{ \Liscurve \left( \frac{ 2i n + (2j+1)(n+p)}{ 4 n (n+p)} 2\pi \right): \quad i,j \in \Zz \right\}. \label{def:Luebeckpointsr}
\end{align}

\begin{figure}[htb]
	\centering
	\subfigure[	Lissajous figure $\gamma_{2,1}$, $|\LubAbr_{2,1}| = 17$.]{\includegraphics[scale=0.75]{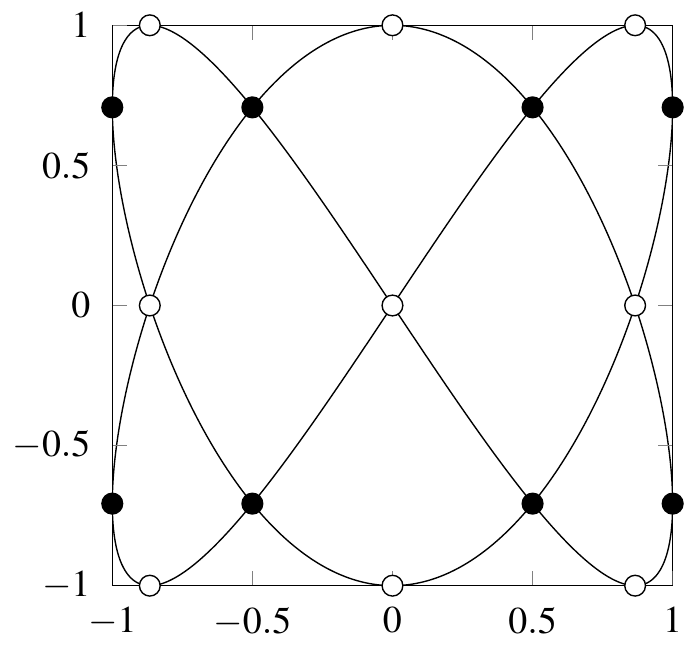}}
	\hfill	
	\subfigure[	Lissajous figure $\gamma_{2,3}$, $|\LubAbr_{2,3}| = 27$.]{\includegraphics[scale=0.75]{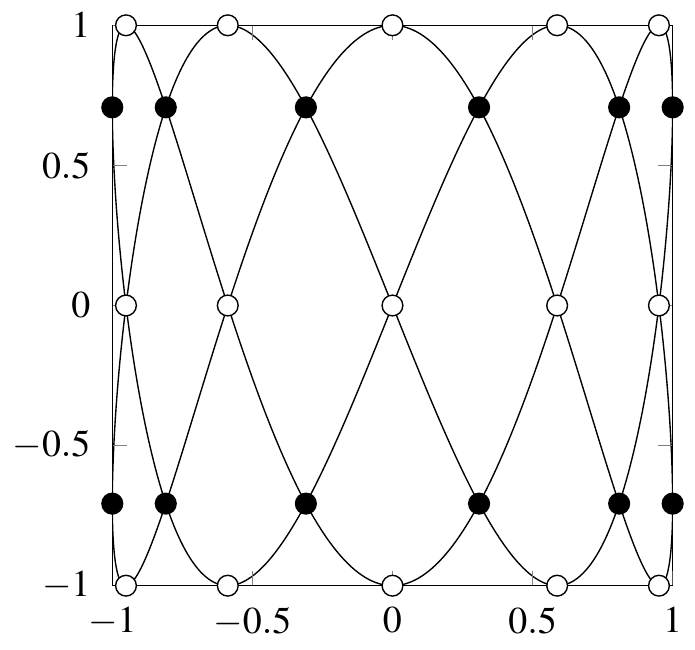}}
  	\caption{Illustration of non-degenerate Lissajous curves $\Liscurve$. 
  	The node points $\Lub$ of $\Liscurve$ are arranged on two different grids (black, white) corresponing to the sets $\Lubblack$ and $\Lubwhite$. 
  	}
	\label{fig:lissajous}
\end{figure}

Two examples of Lissajous curves $\Liscurve$ with the corresponding node points $\Lubblack$ and $\Lubwhite$ are illustrated in Figure \ref{fig:lissajous}.
To get a compact representation of $\Lubblack$ and $\Lubwhite$, we use the following notation for the Gau{\ss}-Lobatto points:
\begin{equation}
	z_k^{n} := \cos\left(\frac{k\pi}{n}\right), \quad n \in \Nn, \; k \in \Zz.
	\label{eq:chebLob}
\end{equation}
Then, evaluating the points \eqref{def:Luebeckpointsb} and \eqref{def:Luebeckpointsr} explicitly for the Lissajous curve \eqref{eq:generatingcurve}, 
we get the following characterization:
\begin{align}
\Lubblack & = \left\{ (-1)^{i+j}  \Big( z_{(2i+1)p}^{2(n+p)}, z_{2 j p}^{2n}\Big): \quad \begin{array}{l} i = 0, \ldots, n+p-1 \\ j = 0, \ldots, n \end{array} 
\right\}, \label{eq:Luebeckpointsb} \\
\Lubwhite & = \left\{ (-1)^{i+j} \Big( z_{2i p}^{2(n+p)}, z_{(2 j + 1) p}^{2n}\Big): \quad \begin{array}{l} i = 0, \ldots, n+p \\ j = 0, \ldots, n-1 \end{array}
\right\}. \label{eq:Luebeckpointsr}
\end{align}

Since $p$ is assumed to be odd and relatively prime to $n$, $p$ is relatively prime to $2n$ as well as to $2(n+p)$. Therefore, by rearranging the points, we can drop the number
$p$ in the lower indices of the Gau{\ss}-Lobatto points in \eqref{eq:Luebeckpointsb} and 
\eqref{eq:Luebeckpointsr}. Due to the point symmetry of the Lissajous curve $\Liscurve$, the term $(-1)^{i+j}$ which preceeds the points in \eqref{eq:Luebeckpointsb} and 
\eqref{eq:Luebeckpointsr} can also be dropped by further rearrangement. This leads to the following simple characterization of the point sets $\Lubblack$ and $\Lubwhite$:
\begin{align}
\Lubblack & = \left\{ \Big( z_{2i'+1}^{2(n+p)}, z_{2 j'}^{2n}\Big): \quad \begin{array}{l} i' = 0, \ldots, n+p-1 \\ j' = 0, \ldots, n \end{array} 
\right\},  \label{eq:Luebeckpointsb2} \\
\Lubwhite & = \left\{ \Big( z_{2i' }^{2(n+p)}, z_{2 j' + 1}^{2n}\Big): \quad \begin{array}{l} i' = 0, \ldots, n+p \\ j' = 0, \ldots, n-1 \end{array}
\right\}. \label{eq:Luebeckpointsr2}
\end{align}

With this characterization, we can also divide the points $\Lub$ into the sets $\Lub^{\mathrm{int}}$ and $\Lub^{\mathrm{out}}$ 
denoting the points lying in the interior and on the boundary of the square $[-1,1]^2$
respectively. We have
\begin{align*}
\Lub^{\mathrm{int}} &:=\left\{ \Big( z_{2i'+1}^{2(n+p)}, z_{2 j'}^{2n}\Big): \quad \begin{array}{l} i' = 0, \ldots, n+p-1 \\ j' = 1, \ldots, n-1 \end{array} \right\} \\ 
                    & \hspace{3cm} \cup \left\{ \Big( z_{2i' }^{2(n+p)}, z_{2 j' + 1}^{2n}\Big): \quad \begin{array}{l} i' = 1, \ldots, n+p-1 \\ j' = 0, \ldots, n-1 \end{array}
\right\}, \\
\Lub^{\mathrm{out}} &:=\left\{ \Big( z_{2i'+1}^{2(n+p)}, \pm 1 \Big): \quad i' = 0, \ldots, n+p-1 
\right\} \\ & \hspace{3cm} \cup \left\{ \Big( \pm 1, z_{2j'+1}^{2n}\Big): \; j' = 0, \ldots, n-1 \right\}.
\end{align*}

From the representation of the Lisa points in \eqref{eq:Luebeckpointsb2} and \eqref{eq:Luebeckpointsr2}, it is possible to count the number of points in the different sets.
They are listed in Table \ref{tab:1}. 

\begin{table}[htb] 
 \caption{Cardinality of the different Lisa sets.} \label{tab:1} 
 \begin{center}
 \begin{tabular}[t]{ll} \hline \noalign{\smallskip}
  Set & Number of elements \\ \noalign{\smallskip}\hline \noalign{\smallskip}
  $\Lub$ & $2n(n+p) + 2n + p$   \\ \noalign{\smallskip}
  $\Lubblack$ & $(n+1) (n+p)$\\ \noalign{\smallskip}
  $\Lubwhite$ & $n (n+p+1)$\\ \noalign{\smallskip}
  $\Lub^{\mathrm{int}}$ & $2n(n+p) - 2 n - p$ \\ \noalign{\smallskip}
  $\Lub^{\mathrm{out}}$ & $4n+2p$  \\ \hline 
  \end{tabular}    
  \end{center}
\end{table}

From the representation in \eqref{def:Luebeckpointsb} and \eqref{def:Luebeckpointsr} and its identification in \eqref{eq:Luebeckpointsb} and 
\eqref{eq:Luebeckpointsr}, we can deduce that 
\begin{align*} 
\Liscurve \left( \frac{(2i+1)n + 2j (n+p) }{ 4 n (n+p)} 2\pi \right) &= \Liscurve \left( \frac{(2i+1)n - 2j (n+p) }{ 4 n (n+p)} 2\pi \right),\\
\Liscurve \left( \frac{ 2i n + (2j+1)(n+p)}{ 4 n (n+p)} 2\pi \right) &= \Liscurve \left( \frac{ - 2i n + (2j+1)(n+p)}{ 4 n (n+p)} 2\pi \right)
\end{align*}
holds for all $i,j \in \Zz$. Moreover, in \eqref{def:Luebeckpointsb} and \eqref{def:Luebeckpointsr} the boundary points are represented by $j \in n \Zz$ and $i \in (n+p) \Zz$, respectively. 
Thus, for interior points in $\Lubblack \cap \Lub^{\mathrm{int}}$, i.e. all points in \eqref{def:Luebeckpointsb} satisfying $j \neq n \Zz$, there exist
at least two different $1 \leq k,k' \leq 4n(n+p)$ in \eqref{def:Luebeckpoints} that represent the same point. The same holds for all interior points in the second set $\Lubwhite$. 

Therefore, all points in $\Lub^{\mathrm{int}}$ are self-intersection points of the Lissajous curve $\Liscurve$. Since $|\Lub^{\mathrm{int}}| = 2n(n+p) - 2 n - p$ corresponds
to the total number of self-intersection points of a non-degenerate Lissajous curve (see \cite{BogleHearstJonesStoilov1994}), we can conclude that $\Lub^{\mathrm{int}}$ is
precisely the set of all self-intersection points of the Lissajous curve $\Liscurve$. Finally, since $2 | \Lub^{\mathrm{int}}| + |\Lub^{\mathrm{out}}| = 4n(n+p)$, we can also
conclude that there are exactly two different $1 \leq k,k' \leq 4n(n+p)$ that represent 
the same point in $\Lub^{\mathrm{int}}$ and that every point in $\Lub^{\mathrm{out}}$ is described by exactly one $1 \leq k\leq 4n(n+p)$ in \eqref{def:Luebeckpoints}.

In order to identify the different integers $k$ in \eqref{def:Luebeckpoints} that describe the same
point $\Aa \in \Lub$, we introduce for $k,k' \in \Zz$ the equivalence relation 
\[ k \equi k' \quad \Leftrightarrow \quad \Liscurve (t_k) = \Liscurve (t_{k'}).\]
We say that $k \in \Zz$ belongs to the equivalence class $[\Aa]$, $\Aa \in \Lub$, if $\Liscurve(t_k) = \Aa$. 
Therefore, by the above argumentation, there is exactly one ${1 \leq k \leq 4n(n+p)}$ in the equivalence class $[\Aa]$ if $\Aa \in \Lub^{\mathrm{out}}$
and exactly two if $\Aa\in \Lub^{\mathrm{int}}$. 

\begin{remark}
There are some remarkable relations between the Lisa, Padua and Xu points. In formal terms, if $p = 0$ in the characterization \eqref{eq:Luebeckpointsb2} and \eqref{eq:Luebeckpointsr2}
of the Lisa points, the points $\LubAbr_{n,0}$ correspond with the even Xu points $\mathrm{XU}_{2n}$ as defined in \cite{Xu1996}. Moreover, if $p = \frac{1}{2}$ 
in \eqref{eq:Luebeckpointsb2} and \eqref{eq:Luebeckpointsr2}, we obtain the even Padua points $\mathrm{PD}_{2n}$ of the second family 
(see \cite{CaliariDeMarchiVianello2008} and \eqref{eq:Paduab}, \eqref{eq:Paduaw} in Section \ref{sec:numericalpart}) with a slight adjustment in the range of the indices. 
A further comparison of these three point sets in terms of numerical simulations is given in the last section
of this article. Finally we would like to add that the Lisa points, similarly to the Padua points, can be considered as two-dimensional Chebyshev lattices of rank 1 (see \cite{CoolsPoppe2011}). 
\end{remark}

\section{Quadrature formulas based on the Lissajous node points}

In this section, we study quadrature rules for bivariate integration defined by point evaluations at the points $\Lub$. As underlying polynomial spaces in $\Rr^2$, we consider
\[ \Pi_n = \spann\{T_{i}(x) T_j(y):\; i+j \leq n\},\]
where $T_i(x)$ denotes the Chebyshev polynomial $T_i(x) = \cos (i \arccos x)$ of the first kind. It is well-known (cf. \cite{Xu1996}) that $\{ T_i(x) T_j(y):\; i+j \leq n\}$ is an orthogonal basis
of $\Pi_n$ with respect to the inner product 
\begin{equation} \label{eq:scalarproduct}
 \langle f,g \rangle := \frac{1}{\pi^2} \int_{-1}^1 \int_{-1}^1 f(x,y) \overline{g(x,y)} \frac{1}{\sqrt{1-x^2}} \frac{1}{\sqrt{1-y^2}} \Dx{x} \Dx{y}.
\end{equation}
The corresponding orthonormal basis is given by $\{\hat{T}_i(x) \hat{T}_j(y): i+j \leq n \}$, where
\begin{equation*} \label{eq:normalizedpolynomials} \hat{T}_i(x) = \left\{ \begin{array}{ll} 
                          1, & \quad \text{if $i = 0$},  \\
                          \sqrt{2} T_i(x), & \quad \text{if $i \neq 0$}.
                          \end{array} \right.
\end{equation*}
Using the trajectory $\Liscurve$, it is possible to reduce a double integral of the form used in $\eqref{eq:scalarproduct}$ into a single integral for a large class of bivariate polynomials.
\begin{lemma} \label{lem:int2Dto1D}
For all polynomials $ P \in \Pi_{8n+4p-1}$ with $ \langle P, T_{2(n+p)}(x)T_{2n}(y) \rangle = 0$, the following formula holds:
\begin{equation} \label{eq:int2Dto1D}
\frac{1}{\pi^2} \int_{-1}^1 \int_{-1}^1 P(x,y) \frac{1}{\sqrt{1-x^2}} \frac{1}{\sqrt{1-y^2}} \Dx{x} \Dx{y} = \frac1{2\pi} \int_0^{2\pi} P(\Liscurve(t)) \Dx{t}. \end{equation}
\end{lemma}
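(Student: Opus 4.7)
The idea is to exploit the orthogonality of the tensor-product Chebyshev basis of $\Pi_{8n+4p-1}$ and reduce the identity to the case of a single monomial $T_i(x)T_j(y)$, where both sides can be evaluated explicitly.

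First, I would expand $P$ in the orthogonal basis $\{T_i(x)T_j(y):\, i+j\le 8n+4p-1\}$. The hypothesis $\langle P, T_{2(n+p)}T_{2n}\rangle=0$ is equivalent to the vanishing of the coefficient of $T_{2(n+p)}(x)T_{2n}(y)$. Since both sides of \eqref{eq:int2Dto1D} are linear in $P$, it suffices to verify the identity for $P(x,y)=T_i(x)T_j(y)$ for every pair $(i,j)$ with $i+j\le 8n+4p-1$ and $(i,j)\neq(2(n+p),2n)$. For such a monomial the left-hand side equals $\langle T_iT_j,1\rangle=\delta_{i,0}\delta_{j,0}$.

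Next, I would evaluate the right-hand side. The key trigonometric identity is
\[ T_k(\sin\theta)=\cos(k\pi/2)\cos(k\theta)+\sin(k\pi/2)\sin(k\theta), \]
which follows because both sides are trigonometric polynomials in $\theta$ agreeing on $[-\pi/2,\pi/2]$. Applying it to both factors, $T_i(\sin nt)\,T_j(\sin(n+p)t)$ becomes a product of two trigonometric binomials, which the product-to-sum formulas expand into a linear combination of $\cos$ and $\sin$ at frequencies $in\pm j(n+p)$. Averaging over $[0,2\pi]$ kills every sine and every non-constant cosine, so the right-hand side is nonzero only when $in\pm j(n+p)=0$. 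Since $i,j\ge 0$ and $\gcd(n,n+p)=1$, this forces either $(i,j)=(0,0)$, where both sides equal $1$, or $(i,j)=((n+p)s,ns)$ for some integer $s\ge 1$. A short calculation then yields, for $s\ge 1$,
\[ \frac{1}{2\pi}\int_0^{2\pi}T_i(\sin nt)\,T_j(\sin(n+p)t)\,\Dx{t}=\tfrac{1}{2}\cos(ps\pi/2). \]

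The final step is a case analysis on $s$, and this is where the two hypotheses enter. Because $p$ is odd, $\cos(ps\pi/2)=0$ whenever $s$ is odd, so the right-hand side vanishes and agrees with the left. For even $s\ge 2$ the factor $\cos(ps\pi/2)=\pm 1$ is nonzero, but $s=2$ corresponds precisely to the monomial $(i,j)=(2(n+p),2n)$ removed by the orthogonality hypothesis, while $s\ge 4$ yields $i+j=s(2n+p)\ge 8n+4p$, so the monomial lies outside $\Pi_{8n+4p-1}$. No basis element therefore violates the identity. The main obstacle is this last piece of bookkeeping: one must see that the degree bound $8n+4p-1$ and the single orthogonality constraint are calibrated precisely to eliminate the unique surviving bad monomial while leaving every other case either zero by oddness of $p$ or out of the polynomial degree range.
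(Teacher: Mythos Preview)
Your proposal is correct and follows essentially the same approach as the paper: both reduce by linearity to the basis monomials $T_i(x)T_j(y)$, evaluate the curve integral via the identity $T_k(\sin\theta)=\cos(k\theta-k\pi/2)$, and observe that the only nonzero contributions occur for $(i,j)=((n+p)s,ns)$ with $s$ even, leaving $s=2$ (excluded by the orthogonality hypothesis) as the sole index within the degree bound. The paper phrases the parity condition as ``$i-j$ even'' rather than computing $\tfrac12\cos(ps\pi/2)$ explicitly, but the arguments are otherwise identical.
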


\begin{proof}
We check \eqref{eq:int2Dto1D} for all basis polynomials $T_i(x) T_j(y)$ in the space $\Pi_{8n+4p-1}$. For the left hand side of \eqref{eq:int2Dto1D} we 
get the value $1$ if $(i,j) = (0,0)$ and $0$ otherwise. For the right hand side of \eqref{eq:int2Dto1D} we get also $1$ if $(i,j) = (0,0)$. For $(i,j) \neq (0,0)$ we get for $P(x,y) = T_i(x) T_j(y)$
the expression
\begin{align*}
\frac1{2\pi} \int_0^{2\pi} P(\Liscurve(t)) \Dx{t} &= \frac1{2\pi} \int_0^{2\pi} T_i(\sin(nt)) T_j( \sin((n+p)t)) \Dx{t} \\
&= \frac1{2\pi} \int_0^{2\pi} \cos \left( i n t - i \frac{\pi}2 \right) \cos \left( j (n+p) t - j \frac{\pi}2 \right) \Dx{t}.
\end{align*}
We now determine for which indices $(i,j)$ this integral is different from $0$. This is only the case if $in = j(n+p)$ and $i-j$ is even. Since the numbers $n$ and $n+p$ are relatively prime, this can
only be the case if $i = k(n+p)$, $j = nk$ and $k \in 2 \Nn$ is an even number. We see that the smallest possible value for $k$ is $k = 2$ and the second smallest is $k = 4$. 
Furthermore, the sum of the respective indices is given by $i + j = (2n+p)k$. 
Therefore, we can conclude that for all indices $(i,j)$ satisfying $i+j = 1, \ldots, 4n+2p-1$ and $i+j = 4n+2p+1, \ldots, 8n + 4p -1$ the right hand side of \eqref{eq:int2Dto1D} vanishes.
If $i+j = 4n+2p$, the above integral is nonzero only if $i = 2(n+p)$ and $j = 2n$. \qed
\end{proof}

To get a quadrature formula supported on the points $\Lub$, we define a suitable polynomial subspace
\begin{align*}
\Pi_{n,p}^Q = \spann\{T_{i}(x) T_j(y):\; (i,j) \in \Gamma_{n,p}^Q\}
\end{align*}
with the index set $\Gamma_{n,p}^Q \subset \Nn_0^2$ given by
\begin{align*}
 \Gamma_{n,p}^Q := \Big\{(i,j): \; i\!+\!j \leq 4n \!-\! 1 \Big\} & \cup \! \bigcup_{m = 0}^{4p-1} \! \left \{(i,j): \; i\!+\!j = 4n \! +\! m,\; j < \frac{n(4p \!-\! m)}{p} \right\}.
\end{align*}
Note that the particular index $(i,j) = (2(n+p),2n)$ is not included in $\Gamma_{n,p}^Q$ and that Lemma \ref{lem:int2Dto1D} is applicable for all polynomials 
$P \in \Pi_{n,p}^Q$. An example of the index set $\Gamma_{n,p}^Q$ is shown in Figure \ref{fig:bullets_cross}. Clearly, the polynomial space $\Pi_{n,p}^Q$ 
satisfies $\Pi_{4n-1} \subset \Pi_{n,p}^Q \subset \Pi_{4n+4p-1}$ and the dimension of 
$\Pi_{n,p}^Q$ can be computed as 
\[ \dim \Pi_{n,p}^Q = |\Gamma_{n,p}^Q| = 8n(n+p) + 4 n - 2(p-1) = 4(|\Lub|-n-p) - 2(p-1).\]

\begin{figure}[htb]
	\centering
	\includegraphics[scale=0.75]{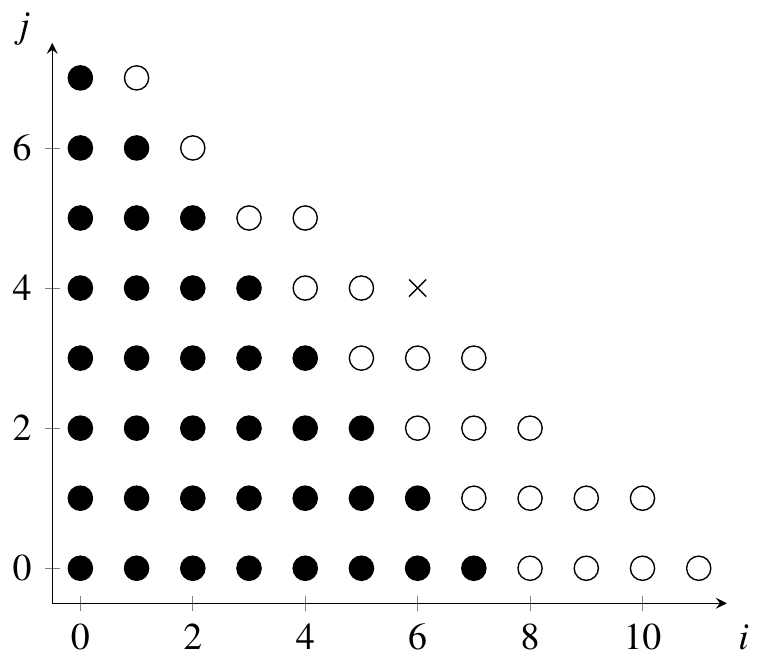}
	\caption{Illustration of the index set $\Gamma_{2,1}^Q$ with black and white bullets. We have $|\Gamma_{2,1}^Q| = 56$ black and white bullets. The black bullets correspond to indices describing the polynomial space $\Pi_{4n-1}$. The black cross is not contained in $\Gamma_{2,1}^Q$. It corresponds to the special index $(i,j) = (6,4)$ appearing in Lemma \ref{lem:int2Dto1D}.} 
	\label{fig:bullets_cross}
\end{figure}

For points $\Aa \in \Lub$, we define the quadrature weights
\[w_{\Aa} := \left\{ \begin{array}{ll} \ds \frac{1}{4n(n+p)}, \quad & \text{if $\Aa \in \Lub^\mathrm{out}$}, \\
                             \ds \frac{2}{4n(n+p)}, \quad & \text{if $\Aa \in \Lub^\mathrm{int}$}.
                 
                \end{array}
\right.\]
Then, we get the following quadrature rule based on the node set $\Lub$: 

\begin{theorem} \label{thm:quadratureruleluebeck}
For all $P \in \Pi_{n,p}^Q$ the quadrature formula
\begin{equation} \label{eq:quadratureruleluebeck}
\frac{1}{\pi^2} \int_{-1}^1 \int_{-1}^1 P(x,y) \frac{1}{\sqrt{1-x^2}} \frac{1}{\sqrt{1-y^2}} \Dx{x} \Dx{y} = \sum_{\Aa \in \Lub} w_\Aa P(\Aa) 
\end{equation}
is exact.
\end{theorem}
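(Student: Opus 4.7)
The plan is to combine Lemma~\ref{lem:int2Dto1D} with the fact that the $M$-point trapezoidal rule on $[0,2\pi]$, where $M = 4n(n+p)$, is exact for every trigonometric polynomial whose nonzero frequencies have absolute value strictly less than $M$. All three ingredients — the double-to-single integral reduction, the regrouping of $\sum_\Aa w_\Aa P(\Aa)$ as a trapezoidal sum, and a Nyquist-type frequency bound — will be combined in that order.

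First, I would check that every $P\in \Pi_{n,p}^Q$ satisfies the hypotheses of Lemma~\ref{lem:int2Dto1D}. Since $\Gamma_{n,p}^Q \subset \{(i,j):\; i+j \leq 4n+4p-1\}$, we have $\Pi_{n,p}^Q \subset \Pi_{4n+4p-1}\subset \Pi_{8n+4p-1}$. Moreover, since $(2(n+p),2n)\notin \Gamma_{n,p}^Q$, orthogonality of $\{T_i(x)T_j(y)\}$ with respect to $\langle\cdot,\cdot\rangle$ gives $\langle P, T_{2(n+p)}T_{2n}\rangle = 0$. Lemma~\ref{lem:int2Dto1D} therefore converts the left-hand side of \eqref{eq:quadratureruleluebeck} into $\tfrac{1}{2\pi}\int_0^{2\pi} P(\Liscurve(t))\,\Dx{t}$.

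Second, I would rewrite the right-hand side as a trapezoidal sum. From the discussion preceding the theorem, every $\Aa\in \Lub^{\mathrm{int}}$ equals $\Liscurve(t_k)$ for exactly two indices $k\in \{1,\dots,4n(n+p)\}$, while every $\Aa\in\Lub^{\mathrm{out}}$ is attained for exactly one such $k$. Combined with the definition of $w_\Aa$ this yields
$$\sum_{\Aa \in \Lub} w_\Aa P(\Aa) = \frac{1}{4n(n+p)}\sum_{k=1}^{4n(n+p)} P(\Liscurve(t_k)),$$
so it remains to show that this discrete average equals the one-variable integral above.

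Third (the main obstacle), I would expand $P\circ \Liscurve$ trigonometrically. Using $T_i(\sin\theta)=\cos(i\theta - i\pi/2)$ together with the product-to-sum identity, each basis term $T_i(\sin(nt))T_j(\sin((n+p)t))$ is a linear combination of cosines with angular frequencies $\pm(in + j(n+p))$ and $\pm(in - j(n+p))$. The heart of the proof is to verify that for every $(i,j)\in \Gamma_{n,p}^Q$ these frequencies all have absolute value strictly less than $M = 4n(n+p)$. When $i+j\leq 4n-1$ this is immediate from $|in\pm j(n+p)|\leq (i+j)(n+p)<4n(n+p)$. When $i+j=4n+m$ with $0\leq m\leq 4p-1$, a direct computation gives $in+j(n+p) = (4n+m)n+jp$, and the inequality $in+j(n+p)<4n(n+p)$ is equivalent to $j<n(4p-m)/p$, which is exactly the defining condition of $\Gamma_{n,p}^Q$ on the diagonal $i+j=4n+m$. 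This same condition forces $j<4n$, and then $|in-j(n+p)|\leq \max(in,j(n+p))<4n(n+p)$ follows as well. Exactness of the $M$-point trapezoidal rule on $[0,2\pi]$ for trigonometric polynomials with frequencies in $(-M,M)$ then matches the discrete sum with the integral and completes the proof.
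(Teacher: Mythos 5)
Your proposal is correct and follows essentially the same route as the paper's proof: reduce the double integral via Lemma~\ref{lem:int2Dto1D}, identify $\sum_{\Aa} w_\Aa P(\Aa)$ with the composite trapezoidal sum over the $4n(n+p)$ parameter values using the node multiplicities, and verify that the defining condition of $\Gamma_{n,p}^Q$ is exactly the bound $(i+j)n+jp<4n(n+p)$ on the frequencies of $P(\Liscurve(t))$. Your additional check of the difference frequency $|in-j(n+p)|$ is harmless but not needed, since it is always dominated by $in+j(n+p)$.
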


\begin{proof}
For all trigonometric $2\pi$-periodic polynomials $q$ of degree less than $4n(n+p)$, the following composite trapezoidal quadrature rule is exact:
\[ \frac{1}{2\pi} \int_0^{2\pi} q(t) dt = \frac1{4n(n+p)} \sum_{k=1}^{4n(n+p)} q\left( t_k \right).\]
Since $\Pi_{n,p}^Q \subset \Pi_{8n+4p-1}$ and $ \Pi_{n,p}^Q \perp T_{2(n+p)}(x)T_{2n}(y)$, we have by Lemma \ref{lem:int2Dto1D} the identity
\[
\frac{1}{\pi^2} \int_{-1}^1 \int_{-1}^1 P(x,y) \frac{1}{\sqrt{1-x^2}} \frac{1}{\sqrt{1-y^2}} \Dx{x} \Dx{y} = \frac1{2\pi} \int_0^{2\pi} P(\Liscurve(t)) \Dx{t}.
\]
Thus, if we show that for $P \in \Pi_{n,p}^Q$ the trigonometric polynomial $P(\Liscurve(t))$ is of degree less than $4n(n+p)$, we immediately get the quadrature formula
\begin{align*}
\frac{1}{\pi^2} \int_{-1}^1 \int_{-1}^1 P(x,y) \frac{1}{\sqrt{1-x^2}} \frac{1}{\sqrt{1-y^2}} \Dx{x} \Dx{y} &= \frac1{4n(n+p)} \sum_{k=1}^{4n(n+p)} P ( \Liscurve (t_k)) 
 \\ &= \sum_{\Aa \in \Lub} w_\Aa P(\Aa).
\end{align*}
To finish the proof we consider the representation of the polynomial $P \in \Pi_{n,p}^Q$ in the orthogonal basis $\{T_{i}(x) T_j(y):\; (i,j) \in \Gamma_{n,p}^Q \}$ and get
\begin{align*}
P(\Liscurve(t)) &= \sum_{(i,j) \in \Gamma_{n,p}^Q} a_{ij} T_i(\sin n t) T_j(\sin (n+p)t) \\
               &= \sum_{(i,j) \in \Gamma_{n,p}^Q} a_{ij} \cos \left(int - i \ts \frac{\pi}{2}\right) \cos\left( j (n+p) t - j \ts \frac{\pi}{2}\right)
\end{align*}
for some coefficients $a_{ij} \in \Rr$.
In order for the trigonometric polynomials in this formula to have a degree less than $4n(n+p)$, the indices $(i,j)$ have to satisfy the condition
\[ (i+j) n + jp < 4n(n+p).\]
In the case that $i+j < 4n$, we have $(i+j)n + jp \leq (i+j)n + 4np < 4n(n+p)$ and the condition above is satisfied.\\
In the case that $i + j = 4n+m$ with $0 \leq m \leq 4p-1$, we have
$(i+j)n + jp = 4n^2 + m n + j p < 4n(n+p)$ and the condition above is satisfied for all $j$ with $j < \frac{n(4p-m)}{p}$. 
By definition, this condition is exactly satisfied for all indices $(i,j) \in \Gamma_{n,p}^Q$ and therefore for all polynomials $P \in \Pi_{n,p}^Q$. \qed
\end{proof}

\begin{remark}
Lemma \ref{lem:int2Dto1D} and Theorem \ref{thm:quadratureruleluebeck} are generalizations of corresponding results proven in \cite{BosDeMarchiVianelloXu2006} for the Padua points. 
An analogous formula also exists for the Xu points (see \cite{MorrowPatterson1978,Xu1996}). Furthermore, the cardinality of the Xu points $\mathrm{XU}_{2n}$ is 
known to be minimal for exact integration of bivariate polynomials in $\Pi_{4n-1}$ with respect to a product Chebyshev weight function (see \cite{Moeller1976,Xu1996}). 
Since $|\Lub| > |\mathrm{XU}_{2n}| = 2 n (n+1)$, this is not the case for the Lisa points. On the other hand, as illustrated in Figure \ref{fig:bullets_cross}, the space $\Pi_{n,p}^Q$, for which 
\eqref{eq:quadratureruleluebeck} is exact, shows a remarkable asymmetry. As for multivariate interpolation, the construction of suitable nodes for cubature rules has a 
long history. For an overview, we refer to the survey article \cite{Cools1997}.
\end{remark}

\section{Interpolation on the Lissajous node points}

Given the quadrature formulas of the last section, we now investigate bivariate interpolation at the points $\Lub$. 
The corresponding interpolation problem can be formulated as follows: for given 
function values $f_\Aa \in \Rr$, $\Aa \in \Lub$, we want to
find a unique bivariate interpolating polynomial $\Lagpol$ such that 
\begin{equation}\label{eq:interpolationproblem}
\Lagpol (\Aa) = f_\Aa \quad \text{for all}\; \Aa \in \Lub.
\end{equation}
To set this problem correctly, we have to fix an underlying interpolation space. This space is linked to $\Pi_{n,p}^Q$ and defined as 
\begin{align*}
\Pi_{n,p}^L := \spann\{T_{i}(x) T_j(y):\; (i,j) \in \Gamma_{n,p}^L\}
\end{align*}
on the index set
\[
 \Gamma_{n,p}^L := \Big\{(i,j): \; i+j \leq 2n \Big\} \cup \bigcup_{m = 1}^{2p-1} \left \{(i,j): \; i+j = 2n+m,\; j < \frac{n(2p-m)}{p} \right\}.
\]

\begin{figure}[htb]
	\centering
	\subfigure[	Index set $\Gamma_{2,1}^L$ with $|\Gamma_{2,1}^L| = 17$.]{\includegraphics[scale=0.75]{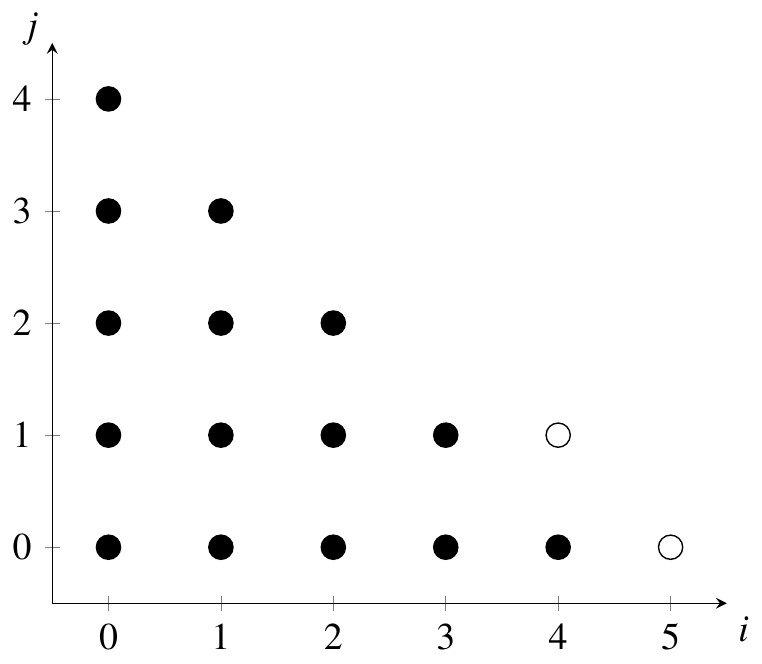}}
	\hfill	
	\subfigure[	Index set $\Gamma_{2,3}^L$ with $|\Gamma_{2,3}^L| = 27$.]{\includegraphics[scale=0.75]{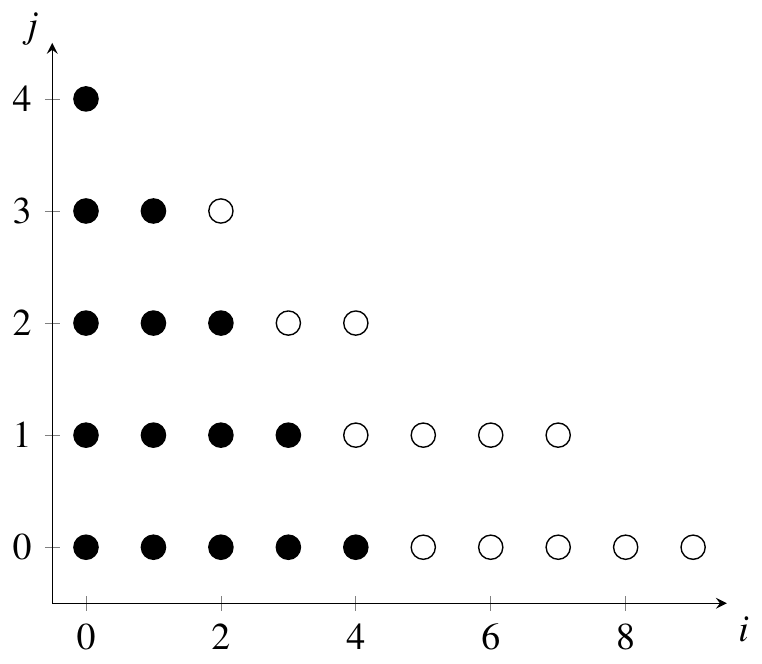}}
  	\caption{Illustration of the index sets $\Gamma_{n,p}^L$ for $n = 2$, $p = 1$ and $n=2$, $p=3$. The black bullets correspond to indices describing the polynomial space $\Pi_{2n}$.}
	\label{fig:twobullets}
\end{figure}

Examples of sets $\Gamma_{n,p}^L$ with different values of $p$ are given in Figure \ref{fig:twobullets}. 
The reproducing kernel $K_{n,p}^L: \Rr^2 \times \Rr^2 \to \Rr$ of the polynomial space $\Pi_{n,p}^L$ is given as
\[K_{n,p}^L(x_\Aa,y_\Aa; x_\Bb, y_\Bb) = \sum_{(i,j) \in \Gamma_{n,p}^L} \hat{T}_i(x_\Aa) \hat{T}_i(x_\Bb) \hat{T}_j(y_\Aa) \hat{T}_j(y_\Bb). \]
It is straightforward to check that the kernel $K_{n,p}^L$ has the reproducing property
\[ \langle P, K_{n,p}^L(x,y; \cdot) \rangle = P(x,y)\]
for all polynomials $P \in \Pi_{n,p}^L$. We have $\Pi_{2n} \subset \Pi_{n,p}^L \subset \Pi_{2(n+p)-1}$. The dimension of the polynomial space $\Pi_{n,p}^L$ is given as
\begin{align*}
\dim (\Pi_{n,p}^L) = |\Gamma_{n,p}^L| &= \frac{(2n+1)(2n+2)}{2} + \sum_{m=1}^{2p-1} \left\lceil \frac{n(2p-m)}{p} \right\rceil \\
                                      &=  2n^2 + n (2p+2) + p = |\Lub|.  
\end{align*}
Therefore, the dimension $\dim(\Pi_{n,p}^L)$ of the polynomial space $\Pi_{n,p}^L$ corresponds precisely to the number of distinct points in $\Lub$.

Soon, we will deduce a formula for the fundamental polynomials of Lagrange interpolation with respect to the points in $\Lub$ and show that the
interpolation problem \eqref{eq:interpolationproblem} has a unique solution.
To this end, we investigate an isomorphism between the polynomial space $\PiL$ and the subspace

\begin{equation} \label{def:spacetrigonometricpolynomials}
 \Tripi \!:= \left\{  q \in\! \Pi_{2n(n+p)}^{\mathrm{trig}}\!\!:  
\; \text{$q(t_k) = q(t_{k'})$ for all $k,k'$ with $k \!\!\equi\!\! k'$} \right\}
\end{equation}
of $2\pi$-periodic trigonometric polynomials
\[\Pi_{2n(n+p)}^{\mathrm{trig}} := \left\{ q(t) = \sum_{m=0}^{2n(n+p)} \hspace{-2mm} a_m \cos(m t) + \hspace{-2mm} \sum_{m=1}^{2n(n+p) - 1} \hspace{-4mm} b_m \sin(mt):\quad a_m, b_m \in \Rr \right\}.\]

\begin{theorem} \label{Thm-isomorphism} The operator 
\[ \Emb: \PiL \to \Tripi, \quad \Emb \! P(t) = P(\Liscurve(t)), \quad t \in [0,2\pi],\]
defines an isometric isomorphism from the space $\left(\PiL, \langle \cdot, \cdot \rangle\right)$ onto
the space $\Tripi$ equipped with the inner product $ \ds \langle q_1, q_2 \rangle = \frac1{2\pi} \int_0^{2\pi} \!\!\! q_1(t) \overline{q_2(t)} \Dx{t}$. 
\end{theorem}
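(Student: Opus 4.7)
The plan is to establish three properties in order: that $\Emb$ maps $\PiL$ into $\Tripi$, that it is an isometry, and that it is surjective. Injectivity is then automatic from the isometry, and combined with surjectivity upgrades this to the claimed isomorphism.

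For \emph{well-definedness}, I would use the identity $T_m(\sin s) = \cos(ms - m\pi/2)$ and the product-to-sum formula to expand, for $(i,j) \in \Gamma_{n,p}^L$,
\[
\Emb(T_i T_j)(t) = \tfrac{1}{2}\cos\!\left((in+j(n+p))t - (i+j)\tfrac{\pi}{2}\right) + \tfrac{1}{2}\cos\!\left((in-j(n+p))t - (i-j)\tfrac{\pi}{2}\right).
\]
The two-case description of $\Gamma_{n,p}^L$ translates directly into the bound $|in \pm j(n+p)| \leq 2n(n+p)$, with equality only at $(i,j) = (0,2n)$. At that exceptional point the phase $n\pi$ is an integer multiple of $\pi$, so only a pure cosine appears at the top frequency, matching the deliberate omission of $\sin(2n(n+p)t)$ in $\Pi_{2n(n+p)}^{\mathrm{trig}}$. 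The equivalence-relation constraint in \eqref{def:spacetrigonometricpolynomials} is automatic because $\Liscurve(t_k) = \Liscurve(t_{k'})$ forces $P(\Liscurve(t_k)) = P(\Liscurve(t_{k'}))$.

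For the \emph{isometry}, I would reduce $\langle \Emb P_1, \Emb P_2 \rangle$ to a single integral of $P_1 P_2 \circ \Liscurve$ along the curve and then invoke Lemma~\ref{lem:int2Dto1D}, which applies to $P_1 P_2 \in \Pi_{4(n+p)-2} \subset \Pi_{8n+4p-1}$ provided $\langle P_1 P_2, T_{2(n+p)}(x)T_{2n}(y)\rangle = 0$. Checking this orthogonality is the main obstacle. Using $T_a T_b = \tfrac{1}{2}(T_{a+b}+T_{|a-b|})$ on each variable, a contribution to $T_{2(n+p)}(x)T_{2n}(y)$ from a pair $(i,j),(k,l) \in \Gamma_{n,p}^L$ would require $i\pm k \in \{\pm 2(n+p)\}$ and $j\pm l \in \{\pm 2n\}$. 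The bounds $i \leq 2n+2p-1$ and $j \leq 2n$ (with $j = 2n$ attained only at $(0,2n)$) immediately rule out every case except $i+k = 2(n+p)$ and $j+l = 2n$. This surviving case forces both pairs into the upper range of $\Gamma_{n,p}^L$ with $i+j = 2n+m$, $k+l = 2n+m'$ and $m+m' = 2p$; adding the strict inequalities $j < n(2p-m)/p$ and $l < n(2p-m')/p$ then yields $j+l < 2n$, contradicting $j+l = 2n$. So the orthogonality holds and the isometry follows.

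For \emph{surjectivity}, I would use a dimension count. Point evaluation $q \mapsto (q(t_k))_{k=1}^{4n(n+p)}$ is injective on the full space $\Pi_{2n(n+p)}^{\mathrm{trig}}$: the only trigonometric polynomial of degree $\leq 2n(n+p)$ that vanishes at all $4n(n+p)$ equispaced nodes is a multiple of $\sin(2n(n+p)t)$, precisely the basis element removed in the definition of $\Pi_{2n(n+p)}^{\mathrm{trig}}$. Restricting to $\Tripi$, this injection factors through $\Rr^{|\Lub|}$ via the equivalence relation, giving $\dim \Tripi \leq |\Lub|$. Since $\Emb$ is an injective isometry into $\Tripi$ from a space of dimension $\dim \PiL = |\Lub|$, the image $\Emb(\PiL) \subseteq \Tripi$ must fill $\Tripi$, which completes the proof.
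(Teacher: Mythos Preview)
Your proof is correct. The well-definedness and isometry steps follow the paper's approach closely; in fact you supply a detailed verification of the orthogonality $\langle P_1 P_2, T_{2(n+p)}(x)T_{2n}(y)\rangle = 0$ that the paper simply asserts.

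The surjectivity argument is where you genuinely diverge. The paper constructs an explicit orthogonal basis $\{l_\Aa : \Aa \in \Lub\}$ of $\Tripi$ by summing shifted Dirichlet kernels $D_{2n(n+p)}^k$ over each equivalence class $[\Aa]$, then reads off $\dim\Tripi = |\Lub|$ from that basis. You instead bound $\dim\Tripi \leq |\Lub|$ directly by observing that point evaluation at the $t_k$ is injective on $\Pi_{2n(n+p)}^{\mathrm{trig}}$ and factors through $\Rr^{|\Lub|}$ on the constrained subspace, then squeeze against $\dim\PiL = |\Lub|$. Both arguments ultimately rest on the same fact (the Dirichlet kernels form a Lagrange basis for $\Pi_{2n(n+p)}^{\mathrm{trig}}$, equivalently point evaluation is a bijection), but your route is more economical for establishing the isomorphism alone. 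The paper's constructive route pays off in the next theorem, where the polynomials $L_\Aa = \Emb^{-1} l_\Aa$ are exactly the Lagrange fundamental polynomials and the explicit Dirichlet-kernel form of $l_\Aa$ is what yields the closed formula~\eqref{eq:fundamentalpolynomialsofLagrangeinterpolation}.
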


\begin{proof}
The system $\left\{ \hat{T}_i(x) \hat{T}_j(y): (i,j) \in \Gamma_{n,p}^L \right\}$ forms an orthonormal basis of the space $\PiL$. The image 
\begin{equation*} 
\Lagbastrig(t) := \LagbastrigE(t), \quad (i,j) \in \Gamma_{n,p}^L,
\end{equation*}
of this basis under the linear operator $\Emb$ is given by
\begin{equation} \label{eq-orthogonaltrigonometricbasis} 
                                          \Lagbastrig(t) =  \left\{\begin{array}{ll} 
                                          \, 1, & \text{if}\; (i,j) = (0,0), \\[1mm]
                                          \sqrt{2} \cos \left(i n t - i \frac{\pi}2 \right), & \text{if}\; j=0, i < 2(n+p), \\[1mm]
                                          \sqrt{2} \cos \left(j (n+p) t - j \frac{\pi}2 \right), & \text{if}\; i=0, j \leq 2n, \\[1mm]
                                          \,2 \cos\! \left(i n t - i \frac{\pi}2\right) \cos \left(j (n+p) t - j \frac{\pi}2 \right) ,\quad & \text{otherwise}. 
                                          \end{array} \right.
\end{equation}
For $(i,j) \in \Gamma_{n,p}^L$, $j \neq 2n$, the functions $\Lagbastrig(t)$ are trigonometric polynomials of degree less than $2n(n+p)$. 
The only trigonometric polynomial of exact degree $2n(n+p)$ is precisely $e_{0,2n}$. By the definition of the operator $\Emb$, the values 
$\Emb\! P (t_k)$ and $\Emb\! P (t_{k'})$, $t_k \neq t_{k'}$ coincide if $\Liscurve(t_k) = \Liscurve(t_{k'})$ is a self-intersection point of $\Liscurve$. 
This is precisely encoded in the constraints given in \eqref{def:spacetrigonometricpolynomials}. We can conclude
that $\Emb$ maps $\PiL$ into the space $\Tripi$. 

For polyonomials $P_1,P_2 \in \PiL$, the product polynomial $P_1 P_2$ is an element of the space $\Pi_{8n+4p-1}$ and satisfies $\langle P_1 P_2, T_{2(n+p)}(x)T_{2n}(y) \rangle = 0$. Therefore, 
by Lemma \ref{lem:int2Dto1D}, the set $\left\{ \Lagbastrig: \; (i,j) \in \Gamma_{n,p}^L \right\}$ is an orthonormal system in $\Tripi$, and thus, $\Emb$ is an isometric embedding 
from $\PiL$ into $\Tripi$.

Now, if we can show that the dimensions of $\PiL$ and $\Tripi$ coincide, the proof is 
finished. To this end, we consider in $\Pi_{2n(n+p)}^{\mathrm{trig}}$ the Dirichlet kernel
\[D_{2n(n+p)}(t) := \frac{1 + \cos (2n(n+p)t) + \ds 2 \!\!\!\!\!\! \sum_{k=1}^{2n(n+p)-1} \!\!\!\!\!\! \cos(kt)}{4n(n+p)} = \frac{\sin(2n(n+p) t ) \cos \frac{t}2 }{4n(n+p) \sin \frac{t}2} .\]
It is well known that the trigonometric polynomials 
\[ D_{2n(n+p)}^k(t) := D_{2n(n+p)}\left(t - t_k \right), \quad k = 1, \ldots, 4n(n+p),\] are precisely the Lagrange polynomials in the 
space $\Pi_{2n(n+p)}^{\mathrm{trig}}$ with respect to the points $t_k$, $k = 1, \ldots, 4n(n+p)$, i.e.
\[D_{2n(n+p)}^k \left( t_{k'} \right) = \delta_{k,k'}, \quad 1 \leq k,k' \leq 4n(n+p).\]
In general, the polynomials $D_{2n(n+p)}^k$ do not lie in the subspace $\Tripi$. However, we can define a basis for $\Tripi$ by using the linear combinations
\begin{equation} \label{eq:Lagrangebasispolynomialstrigonometric}
 l_\Aa(t) := \sum_{k = 1, \ldots, 4n(n+p): \atop k \in [\Aa]} D_{2n(n+p)}^k(t), \quad \Aa \in \Lub.
\end{equation}
Clearly, the polynomials $l_\Aa$ are elements of $\Tripi$, and $l_{\Aa}(t_k)$ is equal to one if $k \in [\Aa]$ and zero if $k \notin [\Aa]$. Also, by
the orthogonality of the functions $D_{2n(n+p)}^k$, we have $\langle l_\Aa, l_\Bb \rangle = 0$ if $\Aa \neq \Bb$. 
Therefore, the system $\{l_\Aa: \; \Aa \in \Lub \}$ forms an orthogonal basis of $\Tripi$ and $\dim (\Tripi) = |\Lub|$. This corresponds exactly with the dimension of the space $\PiL$. \qed
\end{proof}

\begin{theorem} \label{thm:interpolation problem}
For $\Aa = (x_\Aa, y_\Aa) \in \Lub$, the polynomials  $L_{\Aa} := \Emb^{-1} l_{\Aa}$ have the representation
\begin{equation} \label{eq:fundamentalpolynomialsofLagrangeinterpolation}
 L_{\Aa}(x,y) = w_\Aa \left( K_{n,p}^L(x,y; x_\Aa,y_\Aa) - \frac12 \hat{T}_{2n}(y) \hat{T}_{2n}(y_\Aa) \right)
\end{equation}
and are the fundamental polynomials of Lagrange interpolation in the space $\PiL$ on the point set $\Lub$, i.e. 
\[ L_{\Aa}(\Bb) = \delta_{\Aa,\Bb}, \quad \Aa, \Bb \in \Lub.\]
The interpolation problem \eqref{eq:interpolationproblem} has a unique solution in $\PiL$ and the 
interpolating polynomial $\Lagpol$ is given by
\begin{equation*} \label{eq:interpolationpolynomial}
\Lagpol (x,y) = \sum_{\Aa \in \Lub} f_\Aa L_\Aa(x,y). 
\end{equation*}
\end{theorem}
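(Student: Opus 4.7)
The plan is to treat the isometric isomorphism $\Emb$ of Theorem~\ref{Thm-isomorphism} as the main workhorse. Since $l_\Aa \in \Tripi$, its inverse image $L_\Aa := \Emb^{-1} l_\Aa$ is a well-defined element of $\PiL$, and by the very definition of $\Emb$, for every $\Bb \in \Lub$ and every $k$ with $\Liscurve(t_k) = \Bb$,
\[
L_\Aa(\Bb) \;=\; L_\Aa(\Liscurve(t_k)) \;=\; (\Emb L_\Aa)(t_k) \;=\; l_\Aa(t_k) \;=\; \delta_{\Aa,\Bb},
\]
where the last equality is the very construction \eqref{eq:Lagrangebasispolynomialstrigonometric} of $l_\Aa$. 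Hence $\{L_\Aa\}_{\Aa \in \Lub}$ is a family of $|\Lub|=\dim\PiL$ polynomials that separates the nodes; they form a basis of $\PiL$, and therefore $\Lagpol = \sum_{\Aa \in \Lub} f_\Aa L_\Aa$ is the unique solution to \eqref{eq:interpolationproblem} in $\PiL$. All assertions of the theorem except the closed form \eqref{eq:fundamentalpolynomialsofLagrangeinterpolation} follow from this observation.

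\textbf{Explicit formula via the isometry.} For the closed form I would expand $l_\Aa$ in the orthonormal basis $\{\Lagbastrig\}_{(i,j) \in \Gamma_{n,p}^L}$ of $\Tripi$ and pull the expansion back through $\Emb^{-1}$:
\[
L_\Aa(x,y) \;=\; \sum_{(i,j) \in \Gamma_{n,p}^L} \langle l_\Aa, \Lagbastrig \rangle \, \hat T_i(x)\hat T_j(y).
\]
From the explicit description \eqref{eq-orthogonaltrigonometricbasis} one reads that the trigonometric degree of $\Lagbastrig$ is at most $in+j(n+p)$, and a short check over the index set $\Gamma_{n,p}^L$ shows that this bound reaches $2n(n+p)$ only for $(i,j)=(0,2n)$. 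Thus for $(i,j)\neq(0,2n)$ the product $l_\Aa\cdot\Lagbastrig$ lies in $\Pi_{4n(n+p)-1}^{\mathrm{trig}}$, on which the composite trapezoidal rule used in the proof of Theorem~\ref{thm:quadratureruleluebeck} is exact. Combined with $l_\Aa(t_k)=1$ for $k\in[\Aa]$ (and $0$ otherwise) and $\Lagbastrig(t_k)=\hat T_i(x_\Aa)\hat T_j(y_\Aa)$ for $k\in[\Aa]$, this yields at once
\[
\langle l_\Aa, \Lagbastrig \rangle \;=\; \frac{|[\Aa]|}{4n(n+p)}\,\hat T_i(x_\Aa)\hat T_j(y_\Aa) \;=\; w_\Aa\,\hat T_i(x_\Aa)\hat T_j(y_\Aa), \qquad (i,j)\neq (0,2n).
\]

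\textbf{The edge case and conclusion.} The index $(i,j)=(0,2n)$ is special because the trapezoidal rule is thrown off by aliasing at the top frequency $2n(n+p)$; this coefficient has to be computed directly. From the Fourier expansion of $D_{2n(n+p)}^k$ its $\cos(2n(n+p)t)$-component equals $(-1)^k/(4n(n+p))$, so summing over $k\in[\Aa]$ and using that $p$ is odd -- which forces every $k\in[\Aa]$ to have parity $n\bmod 2$ when $\Aa\in\Lubblack$ and parity $n+1\bmod 2$ when $\Aa\in\Lubwhite$ -- together with the values $\hat T_{2n}(y_\Aa)=+\sqrt 2$ on the black subgrid and $\hat T_{2n}(y_\Aa)=-\sqrt 2$ on the white subgrid, gives
\[
\langle l_\Aa, e_{0,2n} \rangle \;=\; \tfrac12\, w_\Aa\,\hat T_{2n}(y_\Aa),
\]
which is precisely half of the ``expected'' value. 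Substituting this coefficient together with those from the previous paragraph into the expansion of $L_\Aa$ reconstitutes $w_\Aa K_{n,p}^L(x,y;x_\Aa,y_\Aa)$ minus the correction $\tfrac12 w_\Aa\hat T_{2n}(y)\hat T_{2n}(y_\Aa)$, which is exactly \eqref{eq:fundamentalpolynomialsofLagrangeinterpolation}. The parity bookkeeping in this single edge case is the only genuinely delicate step; everything else is a direct consequence of the isometry of Theorem~\ref{Thm-isomorphism} together with the exactness of the trapezoidal rule on $\Pi_{4n(n+p)-1}^{\mathrm{trig}}$.
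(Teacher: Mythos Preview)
Your proof is correct and follows the same overall strategy as the paper: establish the Lagrange property directly from $L_\Aa=\Emb^{-1}l_\Aa$, then expand $l_\Aa$ in the orthonormal basis $\{\Lagbastrig\}$ of $\Tripi$ and pull back through $\Emb^{-1}$ to obtain \eqref{eq:fundamentalpolynomialsofLagrangeinterpolation}.

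Where you differ is in the computation of the Fourier coefficients $\langle l_\Aa,\Lagbastrig\rangle$. The paper writes out $l_\Aa$ explicitly as a sum of shifted Dirichlet kernels, applies trigonometric addition formulas, and then matches coefficients case by case (interior vs.\ boundary, black vs.\ white), which is somewhat laborious. You instead observe that for $(i,j)\neq(0,2n)$ the product $l_\Aa\,\Lagbastrig$ has trigonometric degree $<4n(n+p)$, so the trapezoidal rule at the nodes $t_k$ is exact and the coefficient reduces immediately to $w_\Aa\,\hat T_i(x_\Aa)\hat T_j(y_\Aa)$; only the single top-frequency coefficient $(0,2n)$ needs a direct Fourier argument, and your parity bookkeeping there is correct (the key being that $p$ odd forces all $k\in[\Aa]$ to share the parity of $n$ or $n+1$ according as $\Aa\in\Lubblack$ or $\Lubwhite$, which lines up exactly with $\hat T_{2n}(y_\Aa)=\pm\sqrt 2$). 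This is a genuine simplification over the paper's brute-force calculation: it handles all interior and boundary points, black and white, in one uniform stroke, and isolates precisely why the $\tfrac12$ correction appears.
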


\begin{proof}
From the definition \eqref{eq:Lagrangebasispolynomialstrigonometric} of the trigonometric polynomials $l_{\Aa}$ and the mapping $\Emb$ it follows immediately that
the polynomials $L_{\Aa} = \Emb^{-1} l_{\Aa}$ satisfy $L_{\Aa}(\Bb) = \delta_{\Aa,\Bb}$ for $\Bb \in \Lub$. 
Moreover, since the trigonometric polynomials ${\{l_\Aa: \; \Aa \in \Lub \}}$ form an orthogonal basis of the space $\Tripi$, Theorem \ref{Thm-isomorphism} implies that the polynomials 
$\{L_\Aa: \; \Aa \in \Lub \}$ form an orthogonal basis of Lagrange polynomials for the space $\PiL$ as well. 

It remains to prove \eqref{eq:fundamentalpolynomialsofLagrangeinterpolation}. 
To this end, we compute the decomposition of the polynomials $l_\Aa$ in the basis $\Lagbastrig$ given in \eqref{eq-orthogonaltrigonometricbasis} and use 
the inverse of the operator $\Emb$ to obtain \eqref{eq:fundamentalpolynomialsofLagrangeinterpolation}. The proof will be given only for $\Aa\in \Lubblack$ having the
representation
\[ \Aa = (x_\Aa, y_\Aa) = \Liscurve \left( \frac{(2r+1)n + 2s (n+p) }{ 4 n (n+p)} 2\pi \right) 
                        = (-1)^{r+s} \left(z_{(2r+1)p}^{2(n+p)}, z_{2sp}^{2n} \right).\]
We first suppose that $\Aa\in \Lubblack$ is an interor point such
that the two points $k,k' \in [\Aa] \cap [1,4n(n+p)]$, $k \neq k'$ that represent the same $\Aa$ are given as 
\begin{align*}
k &= (2r+1) n + 2s (n+p) \; \mod \;4n(n+p),\\ 
k'&= (2r+1) n - 2s (n+p) \; \mod \;4n(n+p).
\end{align*}
Using simple trigonometric transformations, the basis function $l_\Aa$ can be written as
\begin{align*} l_\Aa(t) &= D_{2n(n+p)}^k(t) + D_{2n(n+p)}^{k'}(t) \\ &= \frac{2}{4n(n+p)} \left(  1 + \cos((2r+1)n \pi) \cos (2 n (n\!+\!p) t) \phantom{\sum_{m=1}^{2n(n+p)-1}} \right. \\
                        & \; + \left. 2 \hspace{-4mm} \sum_{m=1}^{2n(n+p)-1} \hspace{-4mm} \ts \cos \left( \frac{2sm \pi}{2n}\right) \! \left(  \cos \left( \frac{(2r+1)m\pi}{2(n+p)} \right)  \cos (m t) +
                       \sin \left( \frac{(2r+1)m\pi}{2(n+p)}\right)  \sin (m t) \right) \!\! \right)\!.                                                               
\end{align*}
Now, using the explicit expression \eqref{eq-orthogonaltrigonometricbasis} of the 
basis polynomials $\Lagbastrig$ and comparing the coefficients in the decomposition of $l_\Aa$,
we get the following formula for the inner product $\left\langle l_\Aa, \Lagbastrig \right\rangle = \frac{1}{2\pi} \int_0^{2\pi} l_\Aa (t) \Lagbastrig(t) \Dx{t}$, $(i,j) \in \Gamma_{n,p}^L$:
\begin{align*}
\left\langle  l_\Aa, \Lagbastrig \right\rangle &= \left\{
\begin{array}{ll}
\frac{2}{4n(n  +p)}, & \text{if}\; (i,j) = (0,0), \\
\frac{\sqrt{2}}{4n(n  +p)}, & \text{if}\;i = 0, j = 2n, \\
\frac{2 \sqrt{2}(-1)^{(r+s)j} }{4n(n  +p)} \cos \left( j \frac{2s p \pi }{2n}  \right), & \text{if}\; i = 0, j < 2n, \\
\frac{2\sqrt{2}(- 1)^{(r+s)i} }{4n(n  +p)} \cos \left( i \frac{(2r+1)p \pi }{2(n+p)}  \right), & \text{if}\; i \neq 0, j = 0,\\
\frac{4 (-1)^{(r+s)(i+j)}}{4n(n  +p)} \cos\!\left(  i \frac{(2r+1)p \pi}{2(n+p)} \! \right)\! \cos \! \left( j \frac{2s p \pi }{2n} \! \right), \quad &  \text{otherwise,}
\end{array} \right. \\[2mm]
 &=\frac{2}{4n(n\!+\!p)}\left\{ 
\begin{array}{ll}
\frac1{2}\, \hat{T}_{2n}(y_\Aa), & \text{if}\; i = 0, j = 2n, \\[2mm]
\hat{T}_{i}(x_\Aa) \hat{T}_j(y_\Aa),\quad & \text{if}\; (i,j) \in \Gamma_{n,p}^L \setminus (0,2n).
\end{array} \right.
\end{align*}
Therefore, $l_\Aa(t)$ can be decomposed as
\[l_\Aa(t) = \frac{\hat{T}_{2n}(y_\Aa)}{4n(n+p)} e_{0,2n}(t)  + \!\!\!\!\!\!
\sum_{\;\;(i,j) \in \Gamma_{n,p}^L \atop j \neq 2n} \!\!\!\!\!\! \frac{2 \hat{T}_{i}(x_\Aa) \hat{T}_j(y_\Aa)}{4n(n+p)}  \Lagbastrig(t). \]
Now, using the inverse mapping $\Emb^{-1}$ together with the definition of the reproducing kernel $K_{n,p}^L$, we can conclude:
\[L_\Aa(x,y) = \Emb^{-1} l_\Aa (x,y) = w_\Aa \left( K_{n,p}^L(x,y; x_\Aa, y_\Aa) - \frac12 \hat{T}_{2n}(y) \hat{T}_{2n}(y_\Aa) \right). \]
If $\Aa\in \Lubblack$ is a point on the boundary of the square $[-1,1]^2$, the number $k$ can be represented as $k = (2r+1)n$ and the basis function $l_\Aa$ is given as 
\begin{align*} l_\Aa(t) &= D_{2n(n+p)}^k(t) = \frac{1}{4n(n+p)} \left( 1 + \cos((2r+1)n \pi ) \cos (2 n (n+p) t) \phantom{\sum_{m=1}^{2n(n+p)-1}} \right. \\
                        & \left.\hspace{0.5cm} + \; 2 \!\! \!\!\!\! \sum_{m=1}^{2n(n+p)-1} \!\!\! \!\!\! \cos \left(\! \frac{2r+1}{2(n+p)} m \pi\! \right)  \cos (m t) 
                        + \sin \left( \! \frac{2r+1}{2(n+p)} m \pi \!\right) \sin (m t) \! \right).                                                                                  
\end{align*}
Now, similar calculations to the above yield \eqref{eq:fundamentalpolynomialsofLagrangeinterpolation} with the half sized weight function $w_\Aa = \frac1{4n(n+p)}$. 
Finally, for all points $\Aa \in \Lubwhite$, \eqref{eq:fundamentalpolynomialsofLagrangeinterpolation} can be obtained by analogous calculations 
using the representation \eqref{def:Luebeckpointsr} instead of \eqref{def:Luebeckpointsb}. \qed
\end{proof}

\begin{remark}
\eqref{eq:fundamentalpolynomialsofLagrangeinterpolation} has a remarkable resemblence to the Lagrange polynomials of the Padua points. 
For the Padua points, the analog statement of Theorem \ref{thm:interpolation problem} can be proved very elegantly by using ideal theory (cf. \cite{BosDeMarchiVianelloXu2007}). This approach was, however, not 
successful for the more general Lissajous nodes. Here, we had to use the isomorphism $\Emb$ and Theorem \ref{Thm-isomorphism} instead. 
\end{remark}

\section{A simple scheme for the computation of the interpolation polynomial}

In view of Theorem \ref{thm:interpolation problem}, the solution to the interpolation problem \eqref{eq:interpolationproblem} in $\PiL$ is given as
\[ \Lagpol(x,y) = \sum_{\Aa \in \Lub} w_\Aa f_\Aa \left( K_{n,p}^L(x,y; x_\Aa,y_\Aa) - \frac12 \hat{T}_{2n}(y) \hat{T}_{2n}(y_\Aa) \right).\]
The representation of the polynomial $\Lagpol(x,y)$ in the orthonormal Chebyshev basis $\{ \hat{T}_i(x) \hat{T}_j(y):\; (i,j) \in \Gamma_{n,p}^L\}$ can now be written as
\[ \Lagpol(x,y) = \sum_{(i,j) \in \Gamma_{n,p}^L} c_{i,j} \hat{T}_i(x) \hat{T}_j(y)\]
with the coefficients $c_{i,j} = \langle \Lagpol, \hat{T}_i(x) \hat{T}_j(y) \rangle$ given by
\[ c_{i,j} = \left\{ \begin{array}{ll}
                                                                          \ds \!\!\!\!\! \sum_{\;\;\;\Aa \in \Lub} \!\!\!\!\! w_\Aa f_\Aa \, \hat{T}_i(x_\Aa) \hat{T}_j(y_\Aa),\; & \text{if}\; (i,j) \in \Gamma_{n,p}^L \!\setminus\! (0,2n), \\[2mm]
                                                                          \ds \frac12 \!\!\!\!\! \sum_{\;\;\;\Aa \in \Lub} \!\!\!\!\! w_\Aa f_\Aa \, \hat{T}_{2n}(y_\Aa), & \text{if} \; (i,j) = (0,2n). \\
                                                                          \end{array}
\right.\]
Using a matrix formulation, this identity can be written more compactly. We introduce the coefficient matrix $\Ccc_{n,p} = (c_{ij}) \in \Rr^{2(n+p) \times 2n+1}$ by
\[ c_{ij} = \left\{ \begin{array}{ll} \langle \Lagpol, \hat{T}_i(x) \hat{T}_j(y),\quad & \text{if $(i,j) \in \Gamma_{n,p}^L$,}\\ 0, & \text{otherwise}.
                       \end{array} \right.\]
Next, we define the diagonal matrix 
\[ \Ddd_f(\Lub) = \diag \left( w_\Aa f_\Aa,\; \Aa \in \Lub \right) \in \Rr^{|\Lub|\times|\Lub|}.\]
Further, for a general finite set $\Sub \subset \Rr^2$ of points, we introduce the matrices 
\begin{align*} \Ttt_x(\Sub) &= \underbrace{\begin{pmatrix}
                   &\hat{T}_0(x_\Bb) & \\ \cdots & \vdots & \cdots \\ &\hat{T}_{2(n+p)-1}(x_\Bb)& 
                  \end{pmatrix}}_{\Bb \in \Sub} \in \Rr^{2(n+p) \times |\Sub|},\\ \Ttt_y(\Sub) &= \underbrace{\begin{pmatrix}
                   &\hat{T}_0(y_\Bb) & \\ \cdots & \vdots & \cdots \\ &\hat{T}_{2n}(y_\Bb)& 
                  \end{pmatrix}}_{\Bb \in \Sub}\in \Rr^{2n+1 \times |\Sub|}.
\end{align*}
Finally, we define the mask $\Mmm_{n,p} = (m_{ij}) \in \Rr^{2(n+p) \times 2n+1}$ by
\[ m_{i,j} = \left\{ \begin{array}{ll}
                     1, & \text{if}\; (i,j) \in \Gamma_{n,p}^L \setminus (0,2n), \\
                     1/2, \quad & \text{if}\; (i,j) = (0,2n),\\
                     0, & \text{if}\; (i,j) \notin \Gamma_{n,p}^L.
                     \end{array}
\right.\]
Now, the coefficient matrix $\Ccc_{n,p}$ of the interpolating polynomial can be computed as
\begin{equation} \label{eq:compcoeff} \Ccc_{n,p} = \left( \Ttt_x(\Lub) \Ddd_f(\Lub) \Ttt_y(\Lub)^T \right) \odot \Mmm_{n,p},\end{equation}
where $\odot$ denotes pointwise multiplication of the matrix entries. For an arbitrary point $\Bb \subset \Rr^2$, the evaluation $\Lagpol (\Bb)$ of the 
interpolation polynomial $\Lagpol$ at $\Bb$ is then given by
\begin{equation} \label{eq:comppol} \Lagpol(\Bb) = \Ttt_x(\Bb)^T \Ccc_{n,p} \Ttt_y(\Bb).\end{equation}

\begin{remark}
The matrix formulation in \eqref{eq:compcoeff} and \eqref{eq:comppol} is almost identical to the formulation
of the interpolating scheme of the Padua points given in \cite{CaliariDeMarchiVianello2008}. This is due to the similarity in the representation 
\eqref{eq:fundamentalpolynomialsofLagrangeinterpolation} of the Lagrange polynomials. The main 
difference between the schemes lies in the form of the mask $\Mmm_{n,p}$. The 
mask $\Mmm_{n,p}$ for the Lisa points has an asymmetric structure determined by the index set $\Gamma_{n,p}^L$, whereas the matrix is an upper left triangular matrix for the Padua points.
Two examples of such a structure are given in Figure \ref{fig:twobullets}. 
\end{remark}

\section{Numerical Simulations} \label{sec:numericalpart}
Based on the results derived in the last sections, we perform numerical simulations on the behaviour of the $\LubAbr$ points ($\mathrm{LS}$) in comparison to some 
already established point sets. Unless explicitly mentioned, we assume $p=1$ for all numerical simulations of the $\LubAbr$ points. For the comparison point sets, our 
focus is on the Xu points (XU) \cite{Xu1996} and the Padua points (PD) \cite{CaliariDeMarchiVianello2005}. Based on the Chebychev-Lobatto points given by \eqref{eq:chebLob}, 
and in correspondance to \eqref{eq:Luebeckpointsb2} and \eqref{eq:Luebeckpointsr2}, the odd Xu points $\mathrm{XU}_{2n+1}$ are defined as the union of the sets
\begin{align*}
	\textnormal{XU}^{\mathrm{b}}_{2n+1} &= \left\{(z^{2n+1}_{2i}, z^{2n+1}_{2j}):\; 0 \leq i \leq n,\; 0 \leq j \leq n\right\}, \\
	\textnormal{XU}^{\mathrm{w}}_{2n+1} &= \left\{(z^{2n+1}_{2i+1}, z^{2n+1}_{2j+1}):\; 0 \leq i \leq n,\; 0 \leq j \leq n\right\}, 		
\end{align*}
with the cardinality $|\textnormal{XU}_{2n+1}| = 2(n+1)^2$. In turn, the even Padua points $\mathrm{PD}_{2n}$ (2nd family) are defined as the union of the sets
\begin{align}
	\textnormal{PD}^{\mathrm{b}}_{2n} &= \left\{(z^{2n+1}_{2i+1}, z^{2n}_{2j}):\; 0 \leq i \leq n,\; 0 \leq j \leq n \right\}, \label{eq:Paduab}\\	
	\textnormal{PD}^{\mathrm{w}}_{2n} &= \left\{(z^{2n+1}_{2i}, z^{2n}_{2j+1}):\; 0 \leq i \leq n,\; 0 \leq j \leq n-1 \right\}. \label{eq:Paduaw} 	
\end{align}
The cardinality can be calculated as $|\textnormal{PD}_{2n}| = (n+1)(2n+1)$. The distributions of the $\LubAbr$, Xu and Padua points are shown for small degrees of $n$ in Figure~\ref{fig:points}. 
The point sets are introduced in such a way that an equally chosen $n$ results in a similar cardinality. 

\begin{figure}[htb]
	\centering
	\subfigure[	Point sets for $n=2$.]{\includegraphics[scale=0.72]{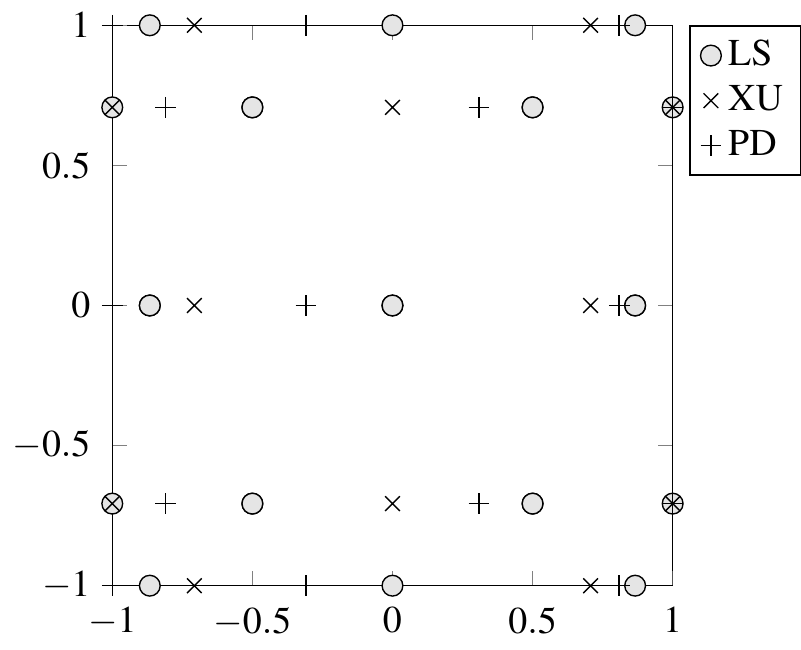}}
	\hfill	
	\subfigure[	Point sets for $n=5$.]{\includegraphics[scale=0.72]{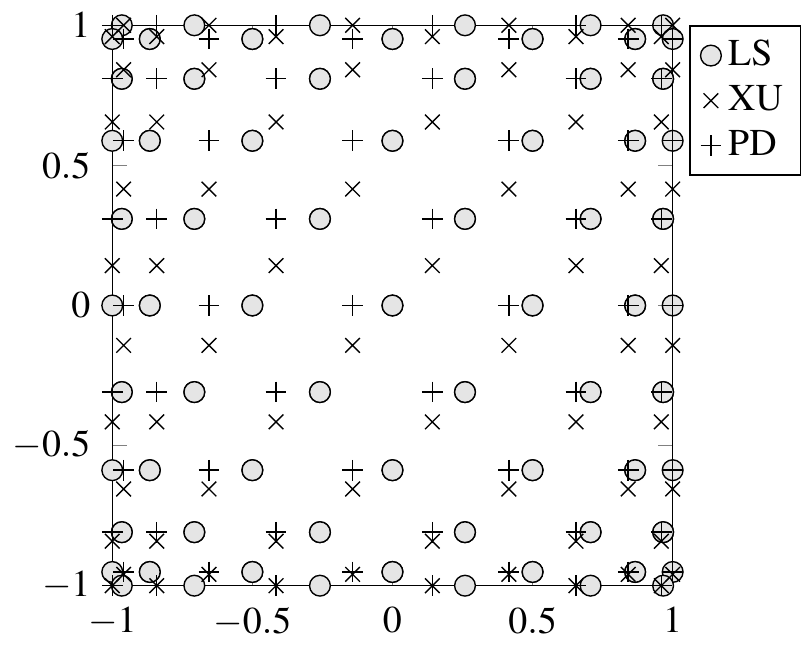}}
  	\caption{Visualizations of the $\LubAbr$ ($\mathrm{LS}$), Xu (XU) and Padua (PD) point sets.}
	\label{fig:points}
\end{figure}

The stability of the mapping $f \to \Lagpol$ is evaluated by means of the growth of the Lebesgue constant. Here, we calculate the values of the 
Lebesgue constant \[ \Lambda^{\textnormal{LS}}_{n,p} = \max_{\Bb \in [-1,1]^2} \sum_{\Aa \in \Lub} |L_\Aa(\Bb)|\] 
of the Lisa points up to a degree of $n=60$. We compare them with the least-squares fitting of the Lebesgue constant 
for the Padua and the Xu points. As shown 
in \cite{CaliariDeMarchiVianello2005}, it holds for the Padua points that ${\Lambda^{\textnormal{PD}}_{2n} = (\frac2{\pi} \log(2n + 1) + 1.1)^2}$ and as presented for the Xu points in 
\cite{BosCaliariDeMarchiVianello2006} that 
${\Lambda^{\textnormal{XU}}_{2n+1} = (\frac2{\pi} \log(2n + 2))^2}$.
Figure~\ref{fig:lebesgue}(a) indicates that the asymptotic growth of $\Lambda^{\mathrm{LS}}_{n,1}$ corresponds to the order $\mathcal O\left(\log^2 n \right)$ of the Lebesgue constant 
$\Lambda^{\mathrm{PD}}_{2n}$. In Figure~\ref{fig:lebesgue}(b) it is shown, how a variation of the parameter $p$ of the $\LubAbr$ points 
changes the growth of the Lebesgue constant. Here, we consider $p=\{1,3,5,7\}$ and excluded each entry for $n$ and $p$ not being relatively prime. In total, these numerical evaluations
suggest the conjecture that the Lebesgue constant of the Lisa points is of the same order $\mathcal O\left(\log^2 n \right)$ as the Lebesgue constant of the Padua and Xu points (see \cite{BosDeMarchiVianello2006}).

\begin{figure}[htb]
	\centering
	\subfigure[	$\LubAbr_{n,1}$, $\mathrm{XU}_{2n+1}$ and $\mathrm{PD}_{2n}$ point sets.]{\includegraphics[scale=0.65]{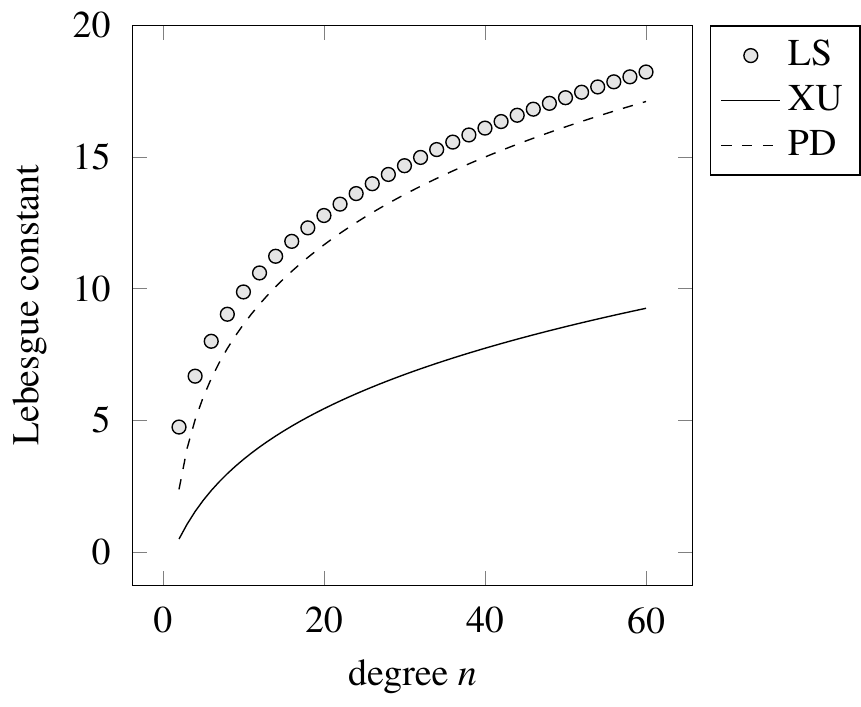}}
	\hfill	
	\subfigure[	$\LubAbr$ point sets for $p \in \{1,3,5,7\}$.]{\includegraphics[scale=0.65]{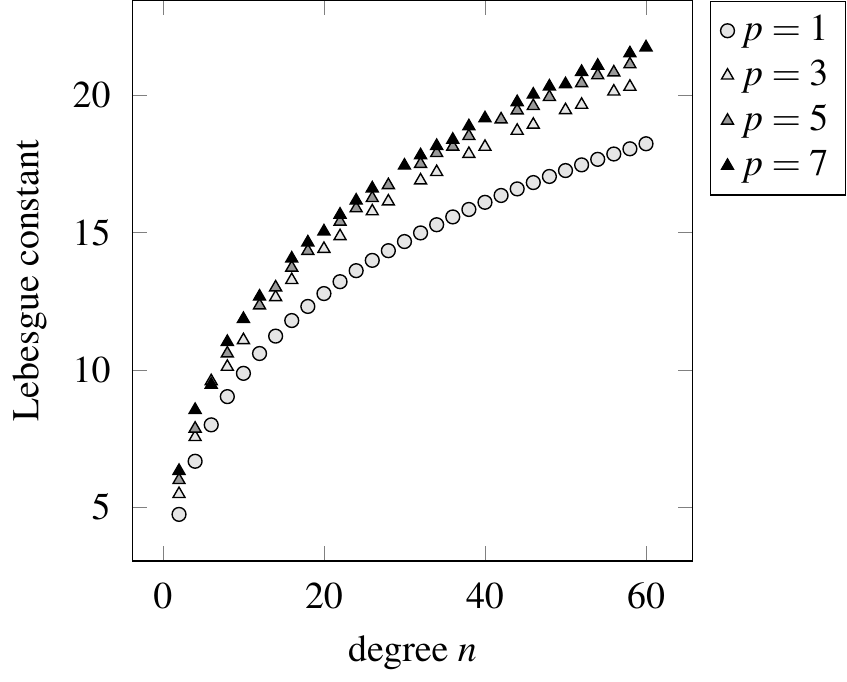}}
  	\caption{Lebesgue constants up to a degree of $n=60$ for the $\LubAbr$ points in comparison to the least-squares fitting of the Lebesgue constant of the Xu and Padua points.}
	\label{fig:lebesgue}
\end{figure}

For a further evaluation of the $\LubAbr$ points, we perform numerical interpolations with the Xu, Padua and $\LubAbr$ points on the 
Franke-Renka-Brown test set \cite{Franke1982,RenkaBrown1999}. In order to simulate the Xu points as well as the Padua points, the numerical algorithms presented in 
\cite{CaliariDeMarchiSommarivaVianello2011,CaliariVianelloDeMarchiMontagna2006} are used. The maximum interpolation errors are computed on a uniform grid
of $100\times100$ points defined in a region $\Omega = \left[0,1\right]\times\left[0,1\right]$. As mentioned above, the degree $n$ is defined to result in a
similar total number of points, i.e. a similar cardinality. For our simulations we take $n \in  \{5,10,20,30\}$. The results are shown in Table~\ref{tab:lub}--\ref{tab:pd}.
It can be seen that the maximum interpolation error of all three point sets shows a similar behaviour in terms of degree $n$, with respect to the chosen test function.
In terms of the $\LubAbr$ point sets, we evaluated the behaviour of $p$ in addition to the aforementioned comparisons. We can state that the influence of varying $p$, with
respect to the maximum interpolation error and the nodes used for the evaluation, is almost negligible.

\setlength{\tabcolsep}{1.2mm} 

\begin{table}[htb]
\caption{Interpolation errors for the points $\LubAbr_{n,1}$.}
\label{tab:lub}
\begin{center}
\begin{tabular}{ccccccccccccc}\hline \noalign{\smallskip} 
$n$ & \# & $F_1$ & $F_2$ & $F_3$ & $F_4$ & $F_5$ & $F_6$ & $F_7$ & $F_8$ & $F_9$ & $F_{10}$ \\  \noalign{\smallskip}\hline  \noalign{\smallskip} 
5 & 71 & 6\,E-2 & 4\,E-2 & 1\,E-3 & 6\,E-5 & 1\,E-2 & 3\,E-5 & 8\,E-1 & 2\,E-1 & 2\,E+1 & 4\,E-1 \\  \noalign{\smallskip}
10 & 241 & 7\,E-3 & 7\,E-3 & 1\,E-6 & 1\,E-10 & 2\,E-5 & 1\,E-8 & 1\,E-5 & 4\,E-3 & 4\,E-1 & 9\,E-2 \\  \noalign{\smallskip}
20 & 881 & 1\,E-6 & 2\,E-4 & 4\,E-12 & 5\,E-15 & 1\,E-13 & 1\,E-14 & 5\,E-14 & 1\,E-7 & 5\,E-6 & 4\,E-2 \\  \noalign{\smallskip}
 30 & 1921 & 3\,E-11 & 7\,E-6 & 3\,E-14 & 1\,E-14 & 4\,E-15 & 3\,E-14 & 2\,E-13 & 1\,E-13 & 9\,E-12 & 3\,E-2 \\  \noalign{\smallskip}
\hline 
\end{tabular}
\end{center} 
\end{table}

\begin{table}[htb]
\caption{Interpolation errors for the points $\mathrm{XU}_{2n+1}$.}
\label{tab:xu}
\begin{center}
\begin{tabular}{ccccccccccccc}\hline  \noalign{\smallskip}
$n$ & \# & $F_1$ & $F_2$ & $F_3$ & $F_4$ & $F_5$ & $F_6$ & $F_7$ & $F_8$ & $F_9$ & $F_{10}$ \\  \noalign{\smallskip}\hline  \noalign{\smallskip} 
5 & 72 & 8\,E-2 & 3\,E-2 & 1\,E-3 & 6\,E-5 & 1\,E-2 & 3\,E-4 & 6\,E-1 & 3\,E-1 & 3\,E+1 & 6\,E-1 \\  \noalign{\smallskip}
10 & 242 & 5\,E-3 & 6\,E-3 & 2\,E-6 & 1\,E-10 & 2\,E-5 & 1\,E-8 & 1\,E-5 & 5\,E-3 & 4\,E-1 & 1\,E-1 \\  \noalign{\smallskip}
20 & 882 & 1\,E-6 & 2\,E-4 & 5\,E-12 & 3\,E-15 & 1\,E-13 & 5\,E-15 & 3\,E-14 & 1\,E-7 & 5\,E-6 & 4\,E-2 \\  \noalign{\smallskip}
30 & 1922 & 3\,E-11 & 7\,E-6 & 1\,E-14 & 5\,E-15 & 3\,E-15 & 9\,E-15 & 4\,E-14 & 5\,E-14 & 9\,E-12 & 2\,E-2 \\  \noalign{\smallskip}

\hline 
\end{tabular} 
\end{center}
\end{table}

\begin{table}[htb]
\caption{Interpolation errors for the Padua points $\mathrm{PD}_{2n}$.}
\label{tab:pd}
\begin{center}
\begin{tabular}{ccccccccccccc} \hline  \noalign{\smallskip} 
$n$ & \# & $F_1$ & $F_2$ & $F_3$ & $F_4$ & $F_5$ & $F_6$ & $F_7$ & $F_8$ & $F_9$ & $F_{10}$ \\  \noalign{\smallskip}\hline  \noalign{\smallskip}
5 & 66 & 6\,E-1 & 4\,E-2 & 1\,E-3 & 6\,E-5 & 1\,E-2 & 3\,E-5 & 9\,E-1 & 2\,E-1 & 4\,E+1 & 5\,E-1 \\  \noalign{\smallskip}
10 & 231 & 6\,E-3 & 7\,E-3 & 3\,E-6 & 1\,E-10 & 2\,E-5 & 1\,E-8 & 2\,E-5 & 6\,E-3 & 7\,E-1 & 1\,E-1 \\  \noalign{\smallskip}
20 & 861 & 2\,E-6 & 2\,E-4 & 7\,E-12 & 3\,E-15 & 1\,E-13 & 4\,E-15 & 2\,E-14 & 1\,E-7 & 7\,E-6 & 4\,E-2 \\  \noalign{\smallskip}
30 & 1891 & 2\,E-11 & 7\,E-6 & 2\,E-14 & 6\,E-15 & 4\,E-15 & 2\,E-14 & 5\,E-14 & 6\,E-14 & 1\,E-11 & 2\,E-2 \\ \noalign{\smallskip}

\hline 
\end{tabular} 
\end{center}
\end{table}

\begin{acknowledgements}
The authors gratefully acknowledge the financial support of the German Federal Ministry of Education and Research 
(BMBF, grant number 13N11090), the German Research Foundation (DFG, grant number BU 1436/9-1 and ER 777/1-1), 
the European Union and the State Schleswig-Holstein (EFRE, grant number 122-10-004).
\end{acknowledgements}


\begin{thebibliography}{10}
\providecommand{\url}[1]{{#1}}
\providecommand{\urlprefix}{URL }
\expandafter\ifx\csname urlstyle\endcsname\relax
  \providecommand{\doi}[1]{DOI~\discretionary{}{}{}#1}\else
  \providecommand{\doi}{DOI~\discretionary{}{}{}\begingroup
  \urlstyle{rm}\Url}\fi

\bibitem{BogleHearstJonesStoilov1994}
Bogle, M.G.V., Hearst, J.E., Jones, V.F.R., Stoilov, L.: {Lissajous knots.}
\newblock J. Knot Theory Ramifications \textbf{3}(2), 121--140 (1994)

\bibitem{BosCaliariDeMarchiVianello2006}
Bos, L., Caliari, M., De~Marchi, S., Vianello, M.: {A Numerical Study of the Xu
  Polynomial Interpolation Formula in Two Variables.}
\newblock Computing \textbf{76}(3), 311--324 (2006)

\bibitem{BosDeMarchiVianelloXu2006}
Bos, L., Caliari, M., De~Marchi, S., Vianello, M., Xu, Y.: {Bivariate Lagrange
  interpolation at the Padua points: the generating curve approach.}
\newblock J. Approx. Theory \textbf{143}(1), 15--25 (2006)

\bibitem{BosDeMarchiVianello2006}
Bos, L., De~Marchi, S., Vianello, M.: {On the Lebesgue constant for the Xu
  interpolation formula.}
\newblock J. Approx. Theory \textbf{141}(2), 134--141 (2006)

\bibitem{BosDeMarchiVianelloXu2007}
Bos, L., De~Marchi, S., Vianello, M., Xu, Y.: {Bivariate Lagrange interpolation
  at the Padua points: The ideal theory approach.}
\newblock Numer. Math. \textbf{108}(1), 43--57 (2007)

\bibitem{CaliariDeMarchiSommarivaVianello2011}
Caliari, M., De~Marchi, S., Sommariva, A., Vianello, M.: {Padua2DM: fast
  interpolation and cubature at the Padua points in Matlab/Octave}.
\newblock Numer. Algorithms \textbf{56}(1), 45--60 (2011)

\bibitem{CaliariDeMarchiVianello2005}
Caliari, M., De~Marchi, S., Vianello, M.: {Bivariate polynomial interpolation
  on the square at new nodal sets.}
\newblock Appl. Math. Comput. \textbf{165}(2), 261--274 (2005)

\bibitem{CaliariDeMarchiVianello2008}
Caliari, M., De~Marchi, S., Vianello, M.: {Bivariate Lagrange interpolation at
  the Padua points: Computational aspects.}
\newblock J. Comput. Appl. Math. \textbf{221}(2), 284--292 (2008)

\bibitem{CaliariVianelloDeMarchiMontagna2006}
Caliari, M., Vianello, M., De~Marchi, S., Montagna, R.: {Hyper2d: a numerical
  code for hyperinterpolation on rectangles}.
\newblock Appl. Math. Comput. \textbf{183}(2), 1138--1147 (2006)

\bibitem{Cools1997}
Cools, R.: Constructing cubature formulae: the science behind the art.
\newblock Acta Numerica \textbf{6}, 1--54 (1997)

\bibitem{CoolsPoppe2011}
Cools, R., Poppe, K.: {Chebyshev lattices, a unifying framework for cubature
  with Chebyshev weight function.}
\newblock BIT \textbf{51}(2), 275--288 (2011)

\bibitem{Franke1982}
Franke, R.: Scattered data interpolation: Tests of some methods.
\newblock Math. Comp. \textbf{38}(157), 181--200 (1982)

\bibitem{GascaSauer2000b}
Gasca, M., Sauer, T.: {On the history of multivariate polynomial
  interpolation.}
\newblock J. Comput. Appl. Math. \textbf{122}(1-2), 23--35 (2000)

\bibitem{GascaSauer2000}
Gasca, M., Sauer, T.: {Polynomial interpolation in several variables.}
\newblock Adv. Comput. Math. \textbf{12}(4), 377--410 (2000)

\bibitem{Gleich2005Nature}
Gleich, B., Weizenecker, J.: Tomographic imaging using the nonlinear response
  of magnetic particles.
\newblock Nature \textbf{435}(7046), 1214--1217 (2005)

\bibitem{Gruettner2013BMT}
Gr{\"u}ttner, M., Knopp, T., Franke, J., Heidenreich, M., Rahmer, J., Halkola,
  A., Kaethner, C., Borgert, J., Buzug, T.M.: On the formulation of the image
  reconstruction problem in magnetic particle imaging.
\newblock Biomed. Tech. / Biomed. Eng. \textbf{58}(6), 583--591 (2013)

\bibitem{Harris2013}
Harris, L.A.: {Bivariate Lagrange interpolation at the Geronimus nodes.}
\newblock Contemp. Math. \textbf{591}, 135--147 (2013)

\bibitem{Kaethner2014IEEE}
Kaethner, C., Ahlborg, M., Bringout, G., Weber, M., Buzug, T.M.: Axially
  elongated field-free point data acquisition in magnetic particle imaging.
\newblock IEEE Trans. Med. Imag.  (2014).
\newblock Accepted for publication

\bibitem{Knopp2009PhysMedBio}
Knopp, T., Biederer, S., Sattel, T.F., Weizenecker, J., Gleich, B., Borgert,
  J., Buzug, T.M.: Trajectory analysis for magnetic particle imaging.
\newblock Phys. Med. Biol. \textbf{54}(2), 385--3971 (2009)

\bibitem{Lamm1997}
Lamm, C.: {There are infinitely many Lissajous knots.}
\newblock Manuscr. Math. \textbf{93}(1), 29--37 (1997)

\bibitem{Moeller1976}
M{\"o}ller, H.M.: {Kubaturformeln mit minimaler Knotenzahl.}
\newblock Numer. Math. \textbf{25}, 185--200 (1976)

\bibitem{MorrowPatterson1978}
Morrow, C.R., Patterson, T.N.L.: {Construction of algebraic cubature rules
  using polynomial ideal theory.}
\newblock SIAM J. Numer. Anal. \textbf{15}, 953--976 (1978)

\bibitem{RenkaBrown1999}
Renka, R.J., Brown, R.: Algorithm 792: Accuracy tests of acm algorithms for
  interpolation of scattered data in the plane.
\newblock ACM Trans. Math. Softw. \textbf{25}(1), 78--94 (1999)

\bibitem{Xu1996}
Xu, Y.: {Lagrange interpolation on Chebyshev points of two variables.}
\newblock J. Approx. Theory \textbf{87}(2), 220--238 (1996)

\end{thebibliography}
\end{document}